\author{Mat\'\i as Menni}
\thanks{This work was supported by CONICET (Argentina), PIP 11220200100912CO, and by the European Union’s Horizon 2020 research and innovation programme under the Marie Sklodowska-Curie grant agreement No 101007627.}
\address{Conicet and Universidad Nacional de La Plata, Argentina.}
\title{
Bi-directional models of \\ `Radically Synthetic' Differential Geometry
}
\keywords{Axiomatic Cohesion, (Radically) Synthetic Differential Geometry}
\newcommand{\Set}{\mathbf{Set}}
\newcommand{\Ring}{\mathbf{Ring}}
\newcommand{\Cinfty}{{C^{\infty}}}
\newcommand{\inftyRing}{\Cinfty\textnormal{-}\Ring}
\newcommand{\twopl}[2]{\langle #1, #2\rangle}
\newcommand{\pr}{\mathrm{pr}} 
\newcommand{\Aff}{\mathrm{Aff}}
\newcommand{\psh}[1]{\widehat{#1}} 
\newcommand{\Psh}{\psh}
\newcommand{\calC}{\ensuremath{\mathcal C}}
\newcommand{\calE}{\mathcal{E}} 
\newcommand{\calG}{\mathcal{G}}
\newcommand{\calS}{\ensuremath{\mathcal S}}
\newcommand{\calX}{{\cal X}}
\newcommand{\Real}{\mathbb{R}}
\newcommand{\Complex}{\mathbb{C}}
\newcommand{\final}{\mathbf{1}}   
\newcommand{\aunit}{0} 
\DeclareMathOperator{\Spec}{Spec}
\DeclareMathOperator{\Dec}{Dec}
\begin{document}
\maketitle

\begin{abstract}
The radically synthetic foundation for smooth geometry formulated in \cite{Lawvere2011} postulates a space $T$ with the property that it has a unique point and, out of  the   monoid  $T^T$ of endomorphisms,  it  extracts a submonoid $R$ which, in many cases, is the (commutative) multiplication of a rig structure.  The rig $R$ is said to be {\em  bi-directional} if  its subobject of invertible elements has two connected components. In this case, $R$ may be equipped with a  pre-order compatible with the rig structure. 
We adjust the construction of `well-adapted' models of Synthetic Differential Geometry  in order to build the first pre-cohesive toposes with a bi-directional $R$.
We also show that, in one of these pre-cohesive variants,  the pre-order on $R$, {\em derived} radically synthetically  from bi-directionality, coincides with that {\em defined} in the original model. 
\end{abstract}

\tableofcontents

\section{Introduction}

The origin of  Synthetic Differential Geometry (SDG) may be traced back to certain  1967 lectures by Lawvere, later summarized in \cite{Lawvere79}.
That summary includes, between brackets, some remarks based on developments that  occurred since the original lectures.
It  postulates a locally catesian closed category $\calX$, a ring $R$ therein, and an isomorphism ${R^D \cong R \times R}$, where ${D \rightarrow R}$ denotes the subobject of elements of square $0$. Between brackets, it is observed that  such isomorphism could be interpreted to mean that the canonical ${R \times R \rightarrow R^D}$ is invertible, as had been done in several papers by Kock. See \cite{Kock1977} where this interpretation  is introduced and where rings satisfying the resulting `Kock-Lawvere (KL)' axiom are called {\em of line type}.

The summary   also sketches the construction of  models, including the role of the algebraic theories of real-analytic and of $\Cinfty$ functions. Between brackets, it is mentioned that, in 1978, Dubuc succeeded in constructing a model of the axioms containing the category of real $\Cinfty$-manifolds. See \cite{Dubuc1979} where $\Cinfty$-rings play the prominent role.

The first book on SDG  appeared in 1981 and it was reprinted as a second edition in \cite{KockSDG2ed}.
Several other books have appeared since 1990 giving respective accounts of the development of the subject 
  \cite{MoerdijkReyesBook, LavendhommeSDG, BellSDG, Kock2010} and \cite{BungeEtAl2018}.

Much can be done with the  KL-axiom alone but, for some purposes,  (e.g. integration), a pre-order on a ring of line type $R$, compatible with the ring structure, is also postulated. 
The typical models are  `gros' toposes such as  those arising in classical Algebraic Geometry \cite{KockSDG2ed} and those intentionally built to produce `well-adapted' models embedding the category of manifolds \cite{Dubuc1979,KockSDG2ed, MoerdijkReyesBook}. See also \cite{Menni2021a} for more recent examples, similar to those in Algebraic Geometry over $\Complex$, but over the simple rig with idempotent addition.

The `gros' vs `petit' distinction among toposes appears already in \cite{SGA4} but the idea to axiomatize toposes `of spaces' is from the early `80s  \cite{Lawvere86}. See also \cite{Lawvere07} for a more recent formulation and \cite{MenniExercise} or \cite{Menni2021} for the definition of {\em pre-cohesive} geometric morphism. For instance, the `gros' Zariski topos $\calE$ determined by the field $\Complex$ of complex numbers is a well-known model of SDG and the canonical  geometric morphism ${ \calE \rightarrow \Set}$ is pre-cohesive. The same holds for some of the models in \cite{Menni2021a}.
On the other hand, the  well-adapted models of SDG are intuitively  toposes of spaces  but, as far as I know, very little work  has been done to  relate them with Axiomatic Cohesion.
Perhaps the only exception are the results in \cite{MoerdijkReyesBook} noting that the well-adapted models discussed there are local, which is one of the requirements in \cite{Lawvere86} (and also in the definition of pre-cohesive map).
Another requirement is that  the inverse image of a pre-cohesive geometric morphism ${ \calE \rightarrow \calS}$ must  have a finite-product preserving left adjoint ${ \calE \rightarrow \calS}$ which, intuitively, sends a space $X$ to the associated set ${\pi_0 X}$ of connected components of $X$. This cannot be true  for the canonical geometric morphism from the  `smooth Zariski topos'  \cite[VI]{MoerdijkReyesBook} to the topos of sets because, in constrast with the classical cases over a field, where every affine scheme is a finite coproduct of connected ones, there are affine $\Cinfty$-schemes with infinite coproduct decompositions. 

The paragraph above does not imply that `well-adaptation' is incompatible with Cohesion. We will show in Sections~\ref{SecCoextensive} and \ref{SecFirstExample} how to modify the techniques to construct well-adapted models of SDG so that they produce very simple (presheaf) pre-cohesive toposes (over $\Set$) with rings of line type.
Moreover, these  pre-cohesive toposes  also model something more radical.

The radically synthetic foundation for smooth geometry formulated  in \cite{Lawvere2011} (and briefly recalled in Section~\ref{SecRSDG}) postulates a space $T$ with the property that it has a unique point and, out of  the  monoid  $T^T$ of endomorphisms of $T$,  it  extracts a submonoid $R$ which, in many cases, is the (commutative) multiplication of a rig structure.  The rig $R$ is said to be {\em  bi-directional} if  its subgroup $U$ of invertible elements is such that ${\pi_0 U = \mathbb{Z}_2}$ the multiplicative group with two elements. In this case, $R$ may be equipped with a  pre-order compatible with the rig structure. 
 As explained loc.cit., this is  `radically synthetic' in the sense that all algebraic structure is derived from constructions on the geometric spaces rather than assumed. (See also \cite{LawvereSDGoutline} which is unpublished but freely available from Lawvere's webpage.)

 We remark that no explicit models of bi-directional  Radical SDG are considered in \cite{Lawvere2011}. It is clear  that the objects $D$ arising in SDG can play the role of $T$ and that they will give the expected result, but the issue of bi-directionality is more subtle. On the one hand, $R$ is not bidirectional in the models coming from Algebraic Geometry; on the other hand, we don't know which of the known well-adapted models of SDG have a `$\pi_0$' functor. Nevertheless, many well-adapted models at least satisfy that the subobject of invertibles in $R$ is not connected; in fact,  in these cases, $U$ is  representable by the $\Cinfty$-ring of smooth $\Real$-valued functions on the non-connected manifold  ${ (-\infty, 0) + (0, \infty)}$  so, at least, they are `bi-directional' in this sense. Essentially the same phenomenon implies that, in our pre-cohesive models, bi-directionality holds in the  sense of the previous paragraph.
 
 Finally we show in Section~\ref{SecW} that,  in one of the pre-cohesive models, the pre-order on $R$, {\em derived} radically synthetically from bi-directionality, coincides with the pre-order that is {\em defined} in the analogous model of SDG.

\section{The coextensive category of $\Cinfty$-rings}
\label{SecCoextensive}

Let $\Ring$ be the coextensive category of rings so that the slice ${\Real/\Ring}$ is the category of $\Real$-algebras.
We next recall the algebraic theory of  $\Cinfty$-rings   \cite[III.5]{KockSDG2ed}. For any finite set $n$, the $n$-ary operations are the smooth functions ${\Real^n \rightarrow \Real}$. The associated algebraic category of $\Cinfty$-rings will be denoted by $\inftyRing$.
There is an evident algebraic functor ${\inftyRing \rightarrow \Real/\Ring}$ that has the following not quite so evident property.

\begin{lemma}[{\cite[Proposition~{III.5.4}]{KockSDG2ed}}]\label{LemIdealsInInftyRings}
Let $A$ be a $\Cinfty$-ring and let ${I \subseteq A}$ be an ideal in the usual ring-theoretic sense. Then the $\Real$-algebra ${A/I}$ carries a unique structure of $\Cinfty$-ring such that the quotient ${A \rightarrow A/I}$ is a morphism of $\Cinfty$-rings. Hence, as map in $\inftyRing$, it is the universal map from $A$ with kernel ${I \subseteq A}$.
\end{lemma}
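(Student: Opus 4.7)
The plan is to build the $\Cinfty$-ring structure on $A/I$ by transporting the operations of $A$ along the quotient, with Hadamard's lemma guaranteeing that ring-theoretic cosets are enough to define smooth-function operations.

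First I would define, for each smooth ${f : \Real^n \rightarrow \Real}$, the candidate operation
\[
f_{A/I}([a_1], \ldots, [a_n]) := [f_A(a_1,\ldots,a_n)],
\]
where $f_A$ is the given $\Cinfty$-operation on $A$. The content of the lemma is concentrated in the well-definedness of this formula: if ${a_i - b_i \in I}$ for every $i$, one needs ${f_A(\vec{a}) - f_A(\vec{b}) \in I}$. For this I would invoke Hadamard's lemma, which produces smooth functions ${g_i : \Real^n \times \Real^n \rightarrow \Real}$ with ${f(\vec{y}) - f(\vec{x}) = \sum_i (y_i - x_i) g_i(\vec{x},\vec{y})}$ as a smooth identity on ${\Real^{2n}}$. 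Applying this identity inside the $\Cinfty$-ring $A$ gives
\[
f_A(\vec{a}) - f_A(\vec{b}) = \sum_i (a_i - b_i)\, (g_i)_A(\vec{b}, \vec{a}),
\]
and since each ${a_i - b_i}$ lies in the ring-theoretic ideal $I$, the right-hand side is in $I$. This is the single nontrivial calculation, and it is the reason the lemma works with a merely ring-theoretic ideal rather than a $\Cinfty$-congruence.

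Next I would check that the assignment ${f \mapsto f_{A/I}}$ respects composition and projections of smooth functions; this is immediate because $A$ already satisfies those equations and the quotient is surjective, so any smooth identity ${f(g_1, \ldots, g_k) = h}$ in the $\Cinfty$-theory descends from $A$ to $A/I$ pointwise on representatives. At this point the quotient ${q : A \rightarrow A/I}$ is a morphism of $\Cinfty$-rings by construction. Uniqueness of the $\Cinfty$-structure is forced: if ${A/I}$ carries any $\Cinfty$-structure making $q$ a morphism, then ${f_{A/I}([\vec a]) = f_{A/I}(q\vec a) = q(f_A(\vec a)) = [f_A(\vec a)]}$, so the operation is determined.

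For the universal property, let ${\varphi : A \rightarrow B}$ be a morphism in $\inftyRing$ with ${I \subseteq \ker \varphi}$. Forgetting to ${\Real/\Ring}$, ordinary ring theory gives a unique $\Real$-algebra map ${\bar\varphi : A/I \rightarrow B}$ with ${\bar\varphi \circ q = \varphi}$. It remains to see that $\bar\varphi$ preserves every smooth operation, but for ${\vec a \in A^n}$,
\[
\bar\varphi(f_{A/I}([\vec a])) = \bar\varphi(q(f_A(\vec a))) = \varphi(f_A(\vec a)) = f_B(\varphi \vec a) = f_B(\bar\varphi [\vec a]),
\]
using that $\varphi$ is a $\Cinfty$-ring map and $q$ is surjective. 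Thus $\bar\varphi$ is the required $\Cinfty$-ring factorization. The main obstacle is the well-definedness step, which is not formal but genuinely uses smooth analysis through Hadamard's lemma; everything else is bookkeeping that follows from the surjectivity of $q$.
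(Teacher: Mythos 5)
Your proof is correct and is essentially the argument the paper defers to: Lemma~\ref{LemIdealsInInftyRings} is quoted from \cite[Proposition~III.5.4]{KockSDG2ed}, whose proof is exactly this Hadamard-lemma computation showing that ring-theoretic cosets respect every smooth operation, followed by the same formal uniqueness and factorization bookkeeping. Nothing is missing.
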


As an application we prove the following (folk?) basic fact.

\begin{proposition}\label{PropInftyRingIsCoextensive} The category $\inftyRing$ is coextensive.
\end{proposition}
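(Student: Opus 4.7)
The plan is to transfer the classical coextensivity of $\Ring$ to $\inftyRing$ via the forgetful functor $U \colon \inftyRing \to \Ring$, using Lemma~\ref{LemIdealsInInftyRings} as the decisive tool. Coextensivity of $\Ring$ is governed by the bijection between binary product decompositions and idempotents, and the quotient constructions involved lift to $\inftyRing$ by the lemma.

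First I record the structural preliminaries: $\inftyRing$ is algebraic over $\Set$, so it has all small limits computed at the underlying-set level; the functor $U$ is a right adjoint, hence preserves limits, and it is conservative. In particular, binary products in $\inftyRing$ agree with those in $\Ring$ on underlying sets.

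Next I establish the key correspondence: product decompositions of a $\Cinfty$-ring $C$ in $\inftyRing$ are in bijection with idempotents $e \in C$. Given such an $e$, the ring-theoretic ideals $(e), (1-e) \subseteq C$ yield, by Lemma~\ref{LemIdealsInInftyRings}, unique $\Cinfty$-ring structures on the quotients $C/(e), C/(1-e)$ making the projections into $\Cinfty$-ring homomorphisms. The induced map $C \to C/(1-e) \times C/(e)$ has as its underlying ring homomorphism the classical iso witnessing coextensivity of $\Ring$, so by conservativity of $U$ it is an iso in $\inftyRing$. Conversely, any product decomposition in $\inftyRing$ descends under $U$ to one in $\Ring$, producing an idempotent of the underlying ring which is automatically an idempotent of $C$.

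To conclude I verify the standard coextensivity axioms. The terminal object $\{0\}$ is strict terminal, since any $\Cinfty$-ring map $\{0\} \to A$ forces $1_A = 0_A$. For the couniversal-decomposition condition---that every $h \colon A \times B \to X$ in $\inftyRing$ induces a compatible product decomposition of $X$---I apply the preceding correspondence to the idempotent $h(1_A, 0_B) \in X$, obtaining $X \cong X_1 \times X_2$ and the unique factorization of $h$ as a product of $\Cinfty$-ring maps. The step that would pose the main difficulty is the middle one, but Lemma~\ref{LemIdealsInInftyRings} dissolves it by supplying the needed $\Cinfty$-ring structures on the relevant quotients.
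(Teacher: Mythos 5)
Your proposal is correct and takes essentially the same route as the paper: both transfer coextensivity from ordinary ($\Real$-)algebras along the forgetful functor, with Lemma~\ref{LemIdealsInInftyRings} supplying the $\Cinfty$-ring structures on quotients by idempotent-generated ideals, so that product projections in $\inftyRing$ are exactly the maps ${A \rightarrow A/(e)}$. The only difference is packaging: the paper checks that the forgetful functor preserves and reflects finite products and pushouts along product projections (exhibiting the pushout ${C \rightarrow C/(fe)}$ explicitly), whereas you verify the couniversal-decomposition condition directly via the idempotent ${h(1_A,0_B)}$ and conservativity --- the same mathematical content.
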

\begin{proof}
We show that coextensivity lifts from ${\Real/\Ring}$ along  ${\inftyRing \rightarrow \Real/\Ring}$.
It is enough to check that this functor preserves and reflects both finite products and pushouts along product projections.
The issue of finite products is easy because any algebraic functor creates limits so  it also preserves and reflects them.

Consider now a product projection  ${\pi : A \rightarrow B}$ in $\inftyRing$.
It is well-known that, as a map in ${\Real/\Ring}$, it may be identified with ${A \rightarrow A/(e)}$ for some idempotent ${e \in A}$.
Then so is the case in $\inftyRing$ by Lemma~\ref{LemIdealsInInftyRings}.
Hence, for any ${f : A \rightarrow C}$ in $\inftyRing$ the square below
\[\xymatrix{
A \ar[d]_-f \ar[r] & A/(e) \ar[d] \\
C \ar[r] & C/(f e)
}\]
is a pushout both in $\inftyRing$ and in ${\Real/\Ring}$.
It is then clear that ${\inftyRing \rightarrow \Real/\Ring}$ both preserves and reflects these pushouts.
\end{proof}

 Let ${(\inftyRing)_{fg} \rightarrow \inftyRing}$ be the full subcategory of finitely generated $\Cinfty$-rings.
 
\begin{corollary}\label{CorLociIsExtensive} 
The essentially small category ${(\inftyRing)_{fg}}$ is coextensive and the full inclusion ${(\inftyRing)_{fg} \rightarrow \inftyRing}$ preserves finite products.
\end{corollary}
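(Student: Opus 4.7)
The plan is to leverage Proposition~\ref{PropInftyRingIsCoextensive} and transfer coextensivity from $\inftyRing$ to the full subcategory $(\inftyRing)_{fg}$. The strategy is to show that $(\inftyRing)_{fg}$ is closed in $\inftyRing$ under the constructions that witness coextensivity---finite products, product-factor decompositions, and pushouts along product projections---so that the coextensivity axioms are inherited automatically.

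First I would show closure under finite products. The terminal object of $\inftyRing$ (the trivial $\Cinfty$-ring) is finitely generated by the empty set. For binary products, given $A, B$ finitely generated by $a_1, \dots, a_n$ and $b_1, \dots, b_m$ respectively, I would show that $A \times B$ is generated by the elements $(a_i, 0)$, $(0, b_j)$, together with the idempotent $e = (1, 0)$: the $\Cinfty$-subring $S$ generated by these elements surjects onto $A$ and $B$ via the projections, so for any $(x, y) \in A \times B$ one can lift $(x, y')$ and $(x'', y)$ to $S$ and recover $(x, y) = e \cdot (x, y') + (1 - e) \cdot (x'', y) \in S$. This simultaneously establishes that the inclusion ${(\inftyRing)_{fg} \to \inftyRing}$ preserves finite products.

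Next I would handle product decompositions and pushouts. Proposition~\ref{PropInftyRingIsCoextensive} (via Lemma~\ref{LemIdealsInInftyRings}) identifies every product projection in $\inftyRing$ with a quotient by an idempotent-generated ideal, and exhibits the pushout along such a projection as another such quotient. Since a quotient of a finitely generated $\Cinfty$-ring is finitely generated on the images of its generators, both constructions stay within $(\inftyRing)_{fg}$: if $A \in (\inftyRing)_{fg}$ decomposes as $A_1 \times A_2$ in $\inftyRing$, then $A_1, A_2 \in (\inftyRing)_{fg}$, and pushouts of product projections out of objects of $(\inftyRing)_{fg}$ exist and are computed as in $\inftyRing$.

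With these closure properties established, the coextensivity axioms for $(\inftyRing)_{fg}$ follow from those for $\inftyRing$, since every diagram required to witness coextensivity already exists in $\inftyRing$ with all vertices in $(\inftyRing)_{fg}$. The only step requiring genuine computation is the closure of $(\inftyRing)_{fg}$ under binary products; everything else is bookkeeping on top of Proposition~\ref{PropInftyRingIsCoextensive}, so I expect this to be the main (though mild) obstacle.
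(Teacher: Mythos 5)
Your proof is correct, and its overall skeleton is the one the paper uses: show that the full subcategory $(\inftyRing)_{fg}\rightarrow\inftyRing$ is closed under the data witnessing coextensivity (finite products, direct factors, and hence pushouts along product projections, which by Lemma~\ref{LemIdealsInInftyRings} are quotients by idempotents and so preserve finite generation), and then let the axioms transfer. Where you genuinely diverge is in the only nontrivial step, closure under binary products. The paper reduces to the free case, observing that it suffices to show ${\Cinfty(\Real^m)\times\Cinfty(\Real^n)}$ is finitely generated, and then invokes the smooth Tietze extension theorem to get a surjection ${\Cinfty(\Real^{m+n+1})\rightarrow\Cinfty(\Real^m)\times\Cinfty(\Real^n)}$ (restriction to two disjoint closed copies of $\Real^m$ and $\Real^n$). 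You instead argue directly for arbitrary finitely generated $A$ and $B$: the $\Cinfty$-subring $S$ generated by $(a_i,0)$, $(0,b_j)$ and $e=(1,0)$ projects onto $A$ and onto $B$ (images of $\Cinfty$-homomorphisms are $\Cinfty$-subrings containing the generators), and the idempotent glueing $e\cdot(x,y')+(1-e)\cdot(x'',y)=(x,y)$ shows $S=A\times B$; equivalently, the map ${\Cinfty(\Real^{n+m+1})\rightarrow A\times B}$ sending the last variable $z$ to $e$ hits $(x,y)$ via $z\,f(\vec{x})+(1-z)\,g(\vec{y})$ when $x=f^A(\vec{a})$, $y=g^B(\vec{b})$. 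This is valid, purely algebraic, avoids the analytic input (Tietze) entirely, and needs no reduction to free $\Cinfty$-rings; the paper's route buys brevity by citing a standard extension theorem, while yours buys a self-contained, more elementary argument with an explicit finite generating set of size $n+m+1$, matching the paper's count.
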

\begin{proof}
It is enough to show that the full  subcategory ${(\inftyRing)_{fg} \rightarrow \inftyRing}$ is closed under finite products and direct factors.
Closure under direct factors follows from their description in terms of idempotents as in the proof of Proposition~\ref{PropInftyRingIsCoextensive}.
So it remains to prove that finitely generated $\Cinfty$-rings are closed under binary product.
It is enough to check that ${\Cinfty(\Real^m) \times \Cinfty(\Real^n)}$ is finitely generated for all finite $m, n$.
Using (smooth) Tietze one may show that a simple ${\Cinfty(\Real^{m+n+1}) \rightarrow \Cinfty(\Real^m) \times \Cinfty(\Real^n)}$ is surjective.
\end{proof}

In contrast with Lemma~\ref{LemIdealsInInftyRings} the forgetful ${\inftyRing \rightarrow \Real/\Ring}$ does not create the universal solution to inverting an element. This will be of key importance throughout the paper.

Also, in contrast with the case of $k$-algebras for a field $k$: it is not the case that every finitely generated $\Cinfty$-ring is finitely presentable \cite[Example~{III.5.5}]{KockSDG2ed}, and  it is not the case that every finitely generated $\Cinfty$-ring is  a finite direct product of directly indecomposable ones.
Moreover, by the Nullstellensatz,  every non-final finitely generated $\Complex$-algebra  has a (co)point, but see \cite[p.~{165}]{KockSDG2ed} for an example of a non-final finitely generated $\Cinfty$-ring without points.

Let $\Aff_{\Cinfty}$ be the (extensive)  opposite of the category  of finitely generated $\Cinfty$-rings.
 Its objects might be called {\em affine $\Cinfty$-schemes}.

\section{Radically Synthetic Differential Geometry}
\label{SecRSDG}

Euler's observation that real numbers are ratios of infinitesimals is the topic of \cite{Lawvere2011}. 
To make that observation rigorous, Lawvere suggests some basic properties of the underlying category of spaces  and postulates the existence of an object  `of infinitesimals' with the sole property that it has a unique point. Then he shows how  some simple axioms of geometric nature  allow to construct a pre-ordered ring `of Euler reals'. In this section, we briefly recall some of these ideas and relate them to SDG.

Let $\calE$ be an extensive category with finite limits  and let ${0 : \final \rightarrow T}$ be a pointed object in $\calE$ such that $T$ is exponentiable.
For any object $X$ in $\calE$, ${X^T}$ is called the {\em tangent bundle} of the space $X$, with  evaluation at $0$ inducing the bundle map ${ev_0 : X^T \rightarrow X}$.

\begin{definition}\label{DefEulerReals}
{\em
The subobject ${R \rightarrow T^T}$ of {\em Euler reals} is defined by declaring
$$\xymatrix{
R \ar[d] \ar[r] & T^T \ar[d]^-{ev_0} \\
\final \ar[r]_-0 & T
}$$
to be a pullback in $\calE$.
}
\end{definition}

The exponential $T^T$ carries a  canonical monoid structure determined by composition in $\calE$. This monoid structure restricts to $R$ and is called {\em multiplication}. The transposition ${\final \rightarrow T^T}$ of the evident composite ${T \rightarrow \final \rightarrow T}$ factors as a map ${0 : \final \rightarrow R}$ followed by the inclusion ${R \rightarrow T^T}$. 
In this way, $R$ has the intrinsic structure of a monoid with $0$.

We are mainly interested in categories of spaces that are toposes, but some of the ideas may be directly illustrated at the level of sites.

\begin{example}[Affine $k$-schemes.]\label{ExAffineKschemes}
Let $k$ be a field and let $\Aff_k$ be the (cartesian and extensive) opposite of the category of finitely presentable $k$-algebras.
If $A$ is one such then ${\Spec A}$ denotes the corresponding object in $\Aff_k$.
In particular, let  ${k[\epsilon] = k[y]/(y^2)}$ and   let  ${T =   \Spec(k[\epsilon])}$. Clearly $T$ has a unique point and a direct calculation shows that   ${T^T = \Spec (k[\varepsilon, x]/(\epsilon x)) = \Spec(k[x,y]/(x y, y^2))}$ and  that  ${R = \Spec(k[x])}$, the affine line.
The subobject ${R \rightarrow T^T}$  corresponds to the unique map ${k[\varepsilon, x] \rightarrow k[x]}$  that sends $x$ to $x$ and $\epsilon$ to $0$.
Also, the multiplication morphism ${R\times R \rightarrow R}$ in $\Aff_k$ corresponds to the morphism ${k[x] \rightarrow  k[x]\otimes_k k[x] = k[y, z]}$ that sends ${x}$ to ${y  z}$. It is well-known that this multiplication is the multiplicative structure of a ring of line type.
\end{example}

The preservation properties of the Yoneda functor imply that, in the topos of presheaves on $\Aff_k$, the monoid of Euler reals determined by the presheaf representable by $T$ is representable by $R$. On the other hand, ${\Aff_k \rightarrow \Psh{\Aff_k}}$ does not preserve finite coproducts.

\begin{example}[The Gaeta topos determined by field.]\label{ExKgaeta} 
Still assuming that $k$ is a field, extensivity of $\Aff_k$ allows us to consider the Gaeta subtopos ${\calG_k \rightarrow \Psh{\Aff_k}}$ of finite-product preserving presheaves. The Yoneda embedding factors through this subtopos and that the factorization ${\Aff_k \rightarrow \calG_k}$ preserves not only limits but also finite coproducts. As in the presheaf case, the monoid of Euler reals determined by $T$ coincides with $R$.
\end{example}

\begin{remark}[On the complex Gaeta topos.]\label{RemOnSiteOfConnectedObjectsWithPoint}
Essentially by the Basis Theorem, every object in $\Aff_k$ is a finite coproduct of connected objects. It follows that the Gaeta topos ${\calG_k}$ is a equivalent the topos of presheaves on the subcategory of connected objects in $\Aff_k$.
If ${k = \Complex}$, every connected object in $\Aff_k$ has a point, so the Gaeta topos of $\Complex$ is pre-cohesive over $\Set$.
We make this remark because the experience of the Complex Gaeta topos will lead us to consider other sites of connected objects that have a point.
\end{remark}

The similarity  between the intuitions about $T$ and about the object $D$ in SDG is no accident. To give a rigorous comparison we prove the  following  result which has surely been known since the mid 60's although I don't think it appears explicitly in \cite{Lawvere79} or in \cite{KockSDG2ed}.
An `external' form of the result is mentioned  in the paragraph before Section~4 in \cite{Rosicky1984}. The present form is suggested in the last paragraph of p.~{250} of \cite{Lawvere2011} and  it  is stated explicitly just before Corollary~{5.5} in  \cite{CockettCruttwell2014}.

\begin{proposition}[KL implies E.]\label{PropKLimpliesEuler} 
If $R$ is a ring of line type then the multiplicative part of $R$  is the monoid of Euler reals determined by ${\aunit : \final \rightarrow D}$.
\end{proposition}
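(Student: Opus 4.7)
The plan is to exploit the Kock--Lawvere axiom to parametrise $R^D$ and then to cut out successively the sub-objects $D^D \hookrightarrow R^D$ and $\mathrm{Euler}(D) \hookrightarrow D^D$, where $\mathrm{Euler}(D)$ denotes the pullback of ${ev_0 : D^D \rightarrow D}$ along ${0: \final \rightarrow D}$.

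First I would recall that KL gives an isomorphism ${R \times R \cong R^D}$ sending a pair ${(a,b)}$ to the transpose of ${d \mapsto a + b\act d}$, together with the uniqueness: a polynomial ${a + b\act d}$ in $d \in D$ is the zero morphism ${D \rightarrow R}$ iff ${a = 0}$ and ${b = 0}$. Using that ${D \rightarrow R}$ is a mono (hence so is the induced ${D^D \rightarrow R^D}$), I would then identify the sub-object ${D^D \hookrightarrow R^D}$ with those pairs ${(a,b) \in R\times R}$ such that ${(a + b\act d)^2 = 0}$ holds for every ${d \in D}$. Expanding and using ${d^2 = 0}$ gives ${a^2 + 2 a b\act d = 0}$, and KL uniqueness then forces ${a^2 = 0}$ and ${2 a b = 0}$; equivalently ${a \in D}$ and ${2 a b = 0}$.

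Next I would impose the Euler condition ${ev_0 f = 0}$, which under the above parametrisation says exactly ${a = 0}$; the constraint ${2ab = 0}$ is then automatic, so ${\mathrm{Euler}(D)}$ is identified with the sub-object ${\{0\} \times R \cong R}$ of ${R^D}$. Concretely, the resulting comparison map ${R \rightarrow D^D}$ is the transpose of the multiplication ${R \times D \rightarrow D}$ (well-defined into $D$ because ${(r\act d)^2 = r^2 d^2 = 0}$). This gives the required isomorphism as objects.

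Finally I would check that this bijection is a morphism of monoids with $\aunit$. For composition: the composite of ${d \mapsto b_1\act d}$ and ${d \mapsto b_2 \act d}$ in $D^D$ is ${d \mapsto b_1\act(b_2 \act d) = (b_1 b_2)\act d}$, so composition in ${T^T}$ restricted to $\mathrm{Euler}(D)$ corresponds to the multiplication of $R$, and the identity ${D \rightarrow D}$ corresponds to ${1 \in R}$. For the distinguished point: the transpose of the composite ${D \rightarrow \final \rightarrow D}$ corresponds to ${(a,b) = (0,0)}$, i.e.\ to ${0 \in R}$. The main subtlety is just the bookkeeping around the two successive pullbacks and the fact that KL uniqueness, rather than any invertibility of $2$, is what forces ${a = 0}$ once the Euler condition is imposed; no other obstacle arises.
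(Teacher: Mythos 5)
Your argument is correct, but it is not the route the paper takes for its official proof: it is, almost verbatim, the ``alternative proof'' that the paper records in the remark immediately after (the generalized-element argument attributed to A.~Kock). There, as in your proposal, KL is used to write maps ${D \rightarrow R}$ as affine maps ${\epsilon \mapsto a + b\epsilon}$, the condition of landing in $D$ is expanded to ${a^2 = 0}$ and ${2ab = 0}$ by KL uniqueness (cancelling the universally quantified $\epsilon$, with no appeal to invertibility of $2$), and the Euler condition ${ev_0 f = 0}$ cuts this down to the pairs ${(0,b)}$, with composition ${(0,b)\circ(0,c) = (0,bc)}$. The paper's own proof is instead purely diagrammatic: it stacks two explicit pullback squares built from the KL isomorphism ${R \times R \cong R^D}$, the mono ${i^D : D^D \rightarrow R^D}$ and ${ev_0}$, to exhibit ${R \rightarrow D^D}$ as the pullback of ${0 : \final \rightarrow D}$ along ${ev_0 : D^D \rightarrow D}$, and then verifies multiplicativity of the comparison map $j$ by transposing to the associativity-type square ${R \times R \times D \rightarrow R \times D \rightarrow D}$. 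The element-style route you chose is shorter and makes the ``affine maps'' intuition explicit, at the cost of the (routine, but worth being aware of) bookkeeping that statements quantified over generalized elements $\epsilon \in D$ with parameters really do translate into the factorization and uniqueness claims you use; the diagrammatic route avoids any internal-language reasoning and records exactly which squares are pullbacks, which is why the paper prefers it as the primary proof while noting that either style suffices.
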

\begin{proof} 
The  map ${R \times R \rightarrow R^D}$ used in  KL axiom makes the inner  triangle below commute
\[\xymatrix{
\final \times R \ar[d] \ar[r]^-{\aunit \times R} & R\times R \ar[rd]_-{\pr_0} \ar[r]^-\cong & R^D \ar[d]^-{ev_\aunit} && 
  R \ar[d]_-{\twopl{!}{id}}^-{\cong} \ar[rr]^-j &&  D^D \ar[d]^-{i^D}  \\
\final \ar[rr]_-{\aunit} && R && 
 \final \times R \ar[r]_-{\aunit\times R} & R\times R \ar[r]_-{\cong} & R^D
}\]
(where $\pr_0$ is the obvious product projection) and the supplementary inner polygon is a pullback so the left rectangle above  is a pullback. Also, if we let ${i : D \rightarrow R}$ be the evident subobject then the square on the right above commutes where the top map $j$  is the transposition of restricted multiplication ${R \times D \rightarrow D}$; so the rectangle is a pullback. Stacking the pullbacks we obtain that ${R \rightarrow D^D}$ is the inverse image of ${\aunit : \final \rightarrow R}$ along 
${(ev_0) (i^D) : D^D \rightarrow R^D}$. As the square on the left  below commutes
\[\xymatrix{
D^D \ar[d]_-{ev_0} \ar[r]^-{i^D} & R^D \ar[d]^-{ev_0} && 
  R \ar[d] \ar[r] & D^D \ar[d]^-{ev_0} \\
D \ar[r]_-i & R &&
  \final \ar[r]_-{\aunit} & D
}\]
and ${\aunit : \final \rightarrow R}$ factors through ${i : D \rightarrow R}$, it follows that the right square above is a pullback.

It remains to check that $j$ preserves multiplication, in other words, that the diagram on the left below commutes
\[\xymatrix{
R \times R \ar[d]_-{\cdot} \ar[r]^-{j\times j} & D^D \times D^D  \ar[d]^-{\circ} && 
   R \times R \times D \ar[d]_-{\cdot \times D} \ar[rr]^-{R \times \cdot} && R \times D \ar[d]^-{\cdot} \\
R \ar[r]_-j & D^D &&
   R \times D \ar[rr]_-{\cdot} && D
}\]
but, via transposition,  this reduces to the commutativity of the diagram on the right above.
\end{proof}

\begin{remark}[Alternative proof of \ref{PropKLimpliesEuler}.]
For comparison we sketch below a proof using generalized elements suggested by A.~Kock (private communication).
The KL-axiom implies that maps ${D \rightarrow R}$ may be identified with affine endomaps ${\epsilon \mapsto a + b\epsilon}$ with ${a , b \in R}$.
If we denote such a map by ${(a, b)}$ then  it follows that ${(a,b) \circ (c, d) = (a + bc, bd)}$.
Such an affine map takes $D$ into $D$ if and only if ${(a + b\epsilon)^2 = 0}$ for every ${\epsilon \in D}$.
This is equivalent to the conjunction of ${a^2 = 0}$ and, for all ${\epsilon \in D}$, ${2ab \epsilon = 0}$; and we may cancel the universally quantified $\epsilon$. So we have identified the monoid of endos of $D$ as that of  affine maps ${(a, b)}$ such that ${a^2 = 0}$ and ${2ab =0}$.
Such an affine map preserves $0$ if and only if ${a = 0}$. In other words, the  monoid of Euler reals may be identified with that of affine maps of the form ${(0, b)}$ with ${b\in R}$ and, clearly, ${(0, b) \circ (0, c) = (0, bc)}$.
\end{remark}

In any case, regardless of the preferred style of proof, Proposition~\ref{PropKLimpliesEuler} shows, roughly speaking,  that  the `radical' and the `conservative' versions of SDG are compatible.

\begin{example}[Affine $\Cinfty$-schemes.]\label{ExAffineCinftySchemes}
Let $\Aff_{\Cinfty}$ be the opposite of the category of finitely generated $\Cinfty$-rings.
As in the case of algebras over a field, for $A$ a finitely generated $\Cinfty$-ring we let ${\Spec A}$ be the corresponding object in $\Aff_{\Cinfty}$.
It is well-known that $\Spec(\Cinfty(\Real))$ is a ring of line type in $\Aff_{\Cinfty}$, so its underlying multiplicative monoid is a monoid of Euler reals by Propostion~\ref{PropKLimpliesEuler}.
\end{example}

Recall that, in an extensive category, an object is said to be {\em connected} if it has exactly two complemented subobjects.
For instance, the rings of line type in Examples~\ref{ExAffineKschemes} and \ref{ExAffineCinftySchemes} are connected.
On the other hand:

\begin{example}[Rings of line type need not be connected.]
It is known that the ring $R$ of line type  in the Weil topos determined by a field $k$  is a coproduct indexed by  $k$.
See \cite[Exercise~{III.11.4}]{KockSDG2ed} pointing at the existence of non-constant endomorphisms $f$ of $R$ such that ${f' = 0}$.  
See also  \cite[Proposition~{6.3}]{MMlevelEpsilon} where it is evident that, in the complex Weil topos, the set of points of $R$ is isomorphic to its set of connected components.
\end{example}

(A different approach to connectedness is that in \cite[Section III.3]{MoerdijkReyesBook} using internal topological spaces in well-adapted models of SDG, but we will not deal with that here.)

Even assuming that $R$ is connected, the subobject  ${U \rightarrow R}$  of invertibles  may or may not  be connected.

\begin{example}[$U$ may be connected.]\label{ExConnectedU}
For any field $k$, ${ U = \Spec(k[x^{-1}])}$ in ${\Aff_k}$, which is connected.
We stress that $k$ need not be algebraically closed. In particular, $U$ is connected in  ${\Aff_{\Real}}$.
This may appear counter-intuitive but, in some sense,  Algebraic Geometry over a field $k$ is not just about $k$ but also about its finite extensions so, intuitively, we might expect that $\Aff_k$ displays some traits of the separable or algebraic closure of $k$.
\end{example}

In contrast, consider the following.

\begin{example}[$U$ may be disconnected.]\label{ExDisconnectedU}
It follows from \cite[Proposition~{III.6.7}]{KockSDG2ed} that the embedding of manifolds into $\Aff_{\Cinfty}$ preserves the pullback
\[\xymatrix{
(-\infty,0) + (0,\infty) \ar[d] \ar[r] & \final \ar[d]^-{1} \\
\Real \times \Real \ar[r]_-{\cdot} & \Real
}\]
and, since the embedding sends  $\Real^2$ to $R^2$ and $\Real$ to $R$, we may conclude that 
\[ U = \Spec(\Cinfty(-\infty,0) \times \Cinfty(0,\infty)) = \Spec(\Cinfty(-\infty,0)) + \Spec( \Cinfty(0,\infty)) \]
 which is not connected in the extensive $\Aff_{\Cinfty}$. 
\end{example}

In order to continue our discussion it is convenient to recall the following restricted version of \cite[Definition~{0.2}]{Yetter87}.

\begin{definition}\label{DefTdiscrete} 
{\em 
An object $X$  in $\calE$ is {\em $T$-discrete} if ${ev_0 : X^T \rightarrow X}$ is an isomorphism. 
}
\end{definition}
 
To capture the idea of a `discrete space of connected components', the suggestion in \cite{Lawvere2011} is to use  a full  reflective subcategory of $\calE$ of $T$-discrete objects and such that the left adjoint    preserves finite products. 
We will not make emphasis  on $T$-discrete objects. We will simply assume that we have a reflective subcategory ${\calS \rightarrow \calE}$ whose left adjoint ${\pi_0 : \calE \rightarrow \calS}$ preserves finite products.
Given such a reflective subcategory  we may say that an object $X$ in $\calE$  is {\em connected} if ${\pi_0 X = \final}$.

\begin{example}[Decidable affine $k$-schemes.]\label{ExDecidableAffineSchemes}
For a fixed base field $k$, the full subcategory ${\Dec(\Aff_k) \rightarrow \Aff_k}$ of decidable objects has a finite-product preserving left adjoint 
$\pi_0$ \cite[I,\S 4, n$^{\textnormal{o}}$ 6]{DemazureGabriel}. In terms of algebras, the adjoint sends a finitely generated $k$-algebra to its largest separable subalgebra. It follows that $R$ is connected in $\Aff_k$ (i.e. $\pi_0 R = \final$).
\end{example}

Example~\ref{ExDecidableAffineSchemes} may be lifted to toposes as in \cite{MenniExercise}.
Again, see \cite{Menni2021a} for analogous examples in the context of algebras with idempotent addition.

To recapitulate, let $\calE$ be an extensive  category with finite limits and let ${\calS \rightarrow \calE}$ be a full subcategory  with finite-product preserving left adjoint $\pi_0$. 
Let  ${0 : \final \rightarrow T}$ be a pointed object in $\calE$  with exponentiable  $T$ and let  ${R}$ be the associated monoid of Euler reals.

\begin{proposition}[{\cite[Proposition~1]{Lawvere2011}}]\label{PropLawvereEuler} If $R$ is connected then, for every $X$ in $\calE$, ${\pi_0  ev_0 : \pi_0 (X^T) \rightarrow  \pi_0 X}$ is an isomorphism. If $T$ is connected and ${R \rightarrow T^T}$ has a retraction, the converse holds.
\end{proposition}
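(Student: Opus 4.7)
My plan is to prove the forward direction using the canonical action of the submonoid $R$ on $X^T$ by precomposition, and the reverse direction by specializing the hypothesis to $X=T$.

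For the forward direction, consider the morphism $\alpha : X^T \times R \to X^T$ obtained by transposing the composite $X^T \times R \times T \to X^T \times T \to X$, where the first map uses the evaluation ${R \times T \to T}$ coming from the inclusion ${R \to T^T}$ and the second is the canonical evaluation. Two facts are central. First, $ev_0 \circ \alpha$ factors as the projection $X^T \times R \to X^T$ followed by $ev_0$: this holds because, by the pullback in Definition~\ref{DefEulerReals}, every generalized element of $R$ sends $0$ to $0$, so precomposing any $f : T \to X$ with an endo from $R$ does not alter $f(0) = ev_0\, f$. Second, the two canonical points $\munit, \aunit : \final \to R$---where $\munit$ corresponds to $id_T \in T^T$ and $\aunit$ is the factorization through $R$ of the transposition of ${T \to \final \to T}$---induce, upon restricting $\alpha$, the endomorphisms $id_{X^T}$ and $const \circ ev_0$ of $X^T$, respectively, where $const : X \to X^T$ is transposed from the projection $X \times T \to X$ and satisfies $ev_0 \circ const = id_X$.

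Now suppose $R$ is connected, so $\pi_0 R = \final$. Applying $\pi_0$ to $\alpha$ and using preservation of finite products, we obtain a single morphism $\pi_0(X^T) \cong \pi_0(X^T) \times \pi_0 R \to \pi_0(X^T)$. Restricting $\alpha$ along $\munit$ and along $\aunit$ (both of which become $id_\final$ after $\pi_0$) gives the same result, whence $\pi_0(id_{X^T}) = \pi_0(const \circ ev_0)$, i.e.\ $\pi_0(const) \circ \pi_0(ev_0) = id$. Combined with $\pi_0(ev_0) \circ \pi_0(const) = \pi_0(id_X) = id$ coming from $ev_0 \circ const = id_X$, this exhibits $\pi_0(ev_0)$ and $\pi_0(const)$ as mutually inverse.

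For the converse, specialize the hypothesis to $X = T$. Since $T$ is connected, $\pi_0 T = \final$, and the hypothesis yields $\pi_0(T^T) \cong \final$. The assumed retraction $T^T \to R$ of ${R \to T^T}$ remains a retraction after applying $\pi_0$, exhibiting $\pi_0 R$ as a retract of the terminal object; but any retract of $\final$ is $\final$, so $\pi_0 R = \final$ and $R$ is connected.

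The main subtlety I foresee is verifying, at the level of morphisms in $\calE$ rather than generalized elements, that the restrictions of $\alpha$ along $\munit$ and $\aunit$ really are $id_{X^T}$ and $const \circ ev_0$. This reduces to chasing the defining exponential transpositions, using that the transposition of ${T \to \final \to T}$ factors through ${R \to T^T}$ precisely via $\aunit$; the work is routine but must be carried out with some care.
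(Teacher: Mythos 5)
Your proof is correct and takes essentially the same route as the paper: the action of $R$ on $X^T$ by precomposition, whose restrictions along $\munit$ and $\aunit$ give $id_{X^T}$ and $const \circ ev_0$, become equal after applying the finite-product-preserving $\pi_0$ once $\pi_0 R = \final$, yielding that $\pi_0(ev_0)$ and $\pi_0(const)$ are mutually inverse. Your converse (specialize to $X=T$ so that $\pi_0(T^T)\cong\pi_0 T=\final$, then use the retraction to exhibit $\pi_0 R$ as a retract of $\final$) is exactly the paper's argument as well.
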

\begin{proof}
Internal composition provides ${X^T}$ with a `pointed' action ${X^T \times T^T \rightarrow X^T}$ and it is straightforward to check that it restricts to a pointed action ${X^T \times R \rightarrow X^T}$.
As $\pi_0$ preserves finite products it sends pointed actions (of $R$) in $\calE$ to pointed actions (of ${\pi_0 R}$) in $\calS$.
If $R$ is connected then ${0 = 1}$ in the  monoid ${\pi_0  R}$ so the result follows.
In more detail, the condition saying that ${1 \in R}$ acts as the identity  means that the triangle on the right below commutes.
`Pointedness' means that the left square below commutes
$$\xymatrix{
X^T \times \final \ar[d]_-{\pr_0} \ar[rr]^-{id \times 0} && X^T \times R \ar[d] &&   X^T \times \final \ar[ll]_-{id \times 1} \ar[lld]^-{\pr_0} \\
X^T \ar[r]_-{ev_0} & X \ar[r] & X^T
}$$
where ${X \rightarrow X^T}$ is the canonical section of $ev_0$, that is, the transposition of the projection ${X\times T\rightarrow X}$. As   ${\pi_0}$ preserves finite products, the diagram below
$$\xymatrix{
\pi_0 (X^T) \times \final \ar[d]_-{\pr_0} \ar[rr]^-{id \times \pi_0 0} &&\pi_0 (X^T) \times \pi_0 R \ar[d] &&   \pi_0 (X^T) \times \final \ar[ll]_-{id \times \pi_0 1} \ar[lld]^-{\pr_0} \\
\pi_0 (X^T) \ar[r]_-{\pi_0 ev_0} & \pi_0 X \ar[r] & \pi_0 (X^T) 
}$$
commutes in $\calS$.
If ${R}$ is connected (i.e. ${\pi_0 R = \final}$) then the following diagram commutes
$$\xymatrix{
\pi_0 (X^T) \ar[d]_-{\pi_0 ev_0} \ar[rd]^{id} \\
 \pi_0 X \ar[r] & \pi_0 (X^T)
}$$ 
so the retraction ${\pi_0 ev_0 : \pi_0 (X^T) \rightarrow \pi_0 X}$ is an isomorphism.

For the converse we have that  ${\pi_0 (T^T) \cong \pi_0 T = \final}$ by hypothesis.
That is, ${T^T}$ is connected and, since retracts of connected objects are connected, the result follows.
\end{proof}

In other words, under mild assumptions, $R$ is connected if and only if, for every space $X$, the tangent bundle of $X$ has the same connected components as $X$.  (Notice that if $R$ is a ring of line type then  the subobject ${R \rightarrow D^D}$  appearing in the proof of Proposition~\ref{PropKLimpliesEuler}  has a retraction needed to define derivatives.)

Rings of line type have always been commutative.
Concerning monoids of Euler reals \cite{Lawvere2011} says that ``To justify that commutativity seems difficult, though intuitively
it is related to the tinyness of $T$, in the sense that even for slightly larger infinitesimal
spaces, the (pointed) endomorphism monoid is non-commutative". 

Also, \cite{Lawvere2011} briefly discusses  two ways to insure that monoids of Euler reals  have a unique addition.
One via Integration, the other via trivial Lie algebras.
The first one is to consider the subobject ${\Phi(X) = \mathrm{Hom}_R(R^X , R) \rightarrow R^{(R^X)}}$ of the functionals ${\phi}$ such that ${\phi (\lambda f) = \lambda (\phi f)}$ for every ${\lambda \in R}$, and then require that ${\Phi(\final + \final) = (\Phi \final)^2}$, so that addition ``emerges as the unique homogeneous map ${R \times R \rightarrow R}$ which becomes the identity when restricted to both $0$-induced axes ${R \rightarrow R\times R}$.''  The second one is to consider the kernel ${\mathrm{Lie}(R) \rightarrow R^T}$ of ${ev_0 : R^T \rightarrow R}$ which has a binary operation that may be called addition; ``the space of endomorphisms of ${\mathrm{Lie}(R)}$ for that operation is a rig that contains
(the right action of) $R$ as a multiplicative sub-monoid, so that if we postulate that
$R$ exhausts the whole endomorphism space, then $R$ inherits a canonical addition''.

Assuming that the monoid $R$ underlies a ring structure,   the simple result below is applied  to derive, from a subgroup of $R$,  a pre-order on $R$, meaning  a subrig $M$ of `non-negative quantities'.

\begin{proposition}[{\cite[Proposition~2]{Lawvere2011}}]\label{Prop2inLawvere2011}
Given a subobject  ${P\subseteq K}$ of a rig $K$, let 
\begin{center}
${A = \{ a \in K \mid a +  P \subseteq P\} \subseteq K}$ \quad and \quad ${M = \{\lambda \in K \mid \lambda A \subseteq A\}\subseteq K}$. 
\end{center}
Then ${A \subseteq K}$ is an additive submonoid and so ${M \subseteq R}$ is a subrig.
If ${1 \in A}$ then ${M \subseteq A}$.
If ${P \subseteq R}$ is a multiplicative subgroup then ${P \subseteq M}$.
\end{proposition}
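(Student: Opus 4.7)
The plan is to verify each clause by a straightforward calculation in the internal language of $\calE$, reading subobjects as predicates and using generalized elements $a, a' \in A$, $\lambda, \mu \in M$, and $p, q \in P$. First, $0 \in A$ because $0 + P = P \subseteq P$, and if $a, a' \in A$ then $(a + a') + P = a + (a' + P) \subseteq a + P \subseteq P$ by associativity, so $A$ is an additive submonoid. Then $0 \in M$ because $0 \cdot A = \{0\} \subseteq A$ (using $0 \in A$), $1 \in M$ because $1 \cdot A = A$, and for $\lambda, \mu \in M$ distributivity together with the first clause gives $(\lambda + \mu) A \subseteq \lambda A + \mu A \subseteq A + A \subseteq A$, while associativity gives $(\lambda\mu) A = \lambda(\mu A) \subseteq \lambda A \subseteq A$; hence $M$ is a subrig of $K$.

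For the third clause: if $1 \in A$ and $\lambda \in M$, then $\lambda = \lambda \cdot 1 \in \lambda A \subseteq A$, so $M \subseteq A$. For the fourth, suppose $P \subseteq K$ is a multiplicative subgroup. Fix $p \in P$ and $a \in A$; to conclude $p \in M$ we must show that $(pa) + q \in P$ for every $q \in P$. The key identity is
$$pa + q = p(a + p^{-1} q),$$
which is available because elements of $P$ are genuine units of $K$. Since $P$ is closed under the group operations, $p^{-1} q \in P$; the defining property of $A$ then gives $a + p^{-1} q \in P$; and a final multiplication by $p \in P$, using that $P$ is closed under multiplication, yields $p(a + p^{-1} q) \in P$.

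I do not expect a substantive obstacle: the proposition is a sequence of closure verifications. The only step that uses more than the bare definitions is the fourth clause, where the manipulation $pa + q = p(a + p^{-1} q)$ relies on $P$ consisting of units of $K$, which is exactly what ``multiplicative subgroup'' supplies. Working inside a topos rather than in $\Set$ introduces nothing new because each clause is a universally quantified (conditional) equation, so the internal-language calculation is valid in any cartesian category carrying a rig object.
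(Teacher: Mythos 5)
Your verification is correct, and in fact the paper offers no proof of its own for this statement: it is quoted from \cite[Proposition~2]{Lawvere2011}, so the element-wise closure checks you give (with the key manipulation $pa+q=p(a+p^{-1}q)$ for the subgroup clause) are exactly the expected argument. One small caveat: your closing remark overreaches slightly, since the subobjects $A$ and $M$ are carved out by universally quantified formulas, so merely having a rig object in a cartesian category is not enough to form them --- one needs universal quantification in the internal language (e.g.\ a topos, as in $\Psh{\calC}$ and $\Psh{\calC_W}$ where the paper applies the result); once $A$ and $M$ exist, your intuitionistically valid generalized-element computations do go through unchanged.
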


For example, if ${P \subseteq \Complex}$ is the subobject of invertible elements then ${A = \{ 0 \}}$ and ${M = \Complex}$.
On the other hand, if ${P = (0, \infty) \subseteq \Real}$ then ${M = A = [0, \infty) \subseteq \Real}$.

\begin{lemma}\label{LemAigualM} 
For ${P, M, A \subseteq K}$ as in Proposition~\ref{Prop2inLawvere2011} the following hold:
\begin{enumerate}
\item If  $K$ is a ring, ${1 \in A}$  and ${-1 \in M}$  then ${0 \in P}$.
\item If ${1 \in A}$ then, the inclusion ${M \subseteq A}$ is an equality of subobjects if and only if $A$ is closed under multiplication.
\end{enumerate}
\end{lemma}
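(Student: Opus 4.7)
The plan is to handle Part~2 first, because it follows closely from Proposition~\ref{Prop2inLawvere2011}. That proposition already gives $M \subseteq A$ under the assumption $1 \in A$, so the equivalence reduces to showing that $A = M$ iff $A$ is closed under multiplication. For the forward direction, if $A = M$ then $A$ inherits the subrig structure of $M$ (again by Prop.~\ref{Prop2inLawvere2011}), so in particular $A$ is closed under multiplication. For the reverse, if $A$ is closed under multiplication then for every $a \in A$ the inclusion $a \cdot A \subseteq A$ holds, so $a \in M$; this gives $A \subseteq M$ and hence the desired equality.

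For Part~1 the plan is first to combine the two hypotheses to obtain $-1 \in A$, and then to exploit the translation symmetry that this entails. From $-1 \in M$ and $1 \in A$, the defining condition of $M$ yields $-1 = (-1)\cdot 1 \in (-1)\cdot A \subseteq A$ (equivalently, one may simply cite $M \subseteq A$ from Prop.~\ref{Prop2inLawvere2011}). Unpacking via the definition of $A$, both $1 + P \subseteq P$ and $-1 + P \subseteq P$ hold. The plan is then to write $0 = (-1) + 1$ and to conclude $0 \in P$ from the second inclusion applied to $1 \in P$.

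The main obstacle is the last step, which needs $1 \in P$, not forced by the displayed hypotheses alone: for instance, in $\Set$ with $K = \Real$ and $P = \Real \setminus \Int$ one has $A = M = \Int$, so both hypotheses of Part~1 are satisfied while $0 \notin P$. The intended reading, consistent with the paragraph that introduces Proposition~\ref{Prop2inLawvere2011} and with its final clause, is that $P$ is the multiplicative subgroup of invertibles of the rig $K$; under this standing assumption $1 \in P$ is automatic and the argument closes cleanly.
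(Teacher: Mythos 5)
Your proof is correct and follows essentially the same route as the paper's: item 2 by the same trivial forward direction plus "$a\in A$ and $A$ closed under multiplication give $aA\subseteq A$, hence $a\in M$", and item 1 via $M\subseteq A$, so $-1\in A$ and $-1+1=0\in P$. Your observation that the last step needs $1\in P$ is apt — the paper's own proof uses this tacitly, and it is harmless in the intended setting since the lemma is applied with $P=U_+$ a multiplicative subgroup (as in the final clause of Proposition~\ref{Prop2inLawvere2011}), which is exactly the standing assumption you make explicit.
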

\begin{proof}
 If ${1 \in A}$ then ${M \subseteq A}$ by Proposition~\ref{Prop2inLawvere2011} so, if ${-1 \in M}$ then ${-1 \in A}$ and hence ${-1 + 1  =  0 \in P}$. 
One direction of the second item is trivial.
For the other, assume that $A$ is closed under multiplication. 
Then, for every ${a \in A}$, ${a A \subseteq A}$ so ${a \in M}$.
\end{proof}

Let ${U \rightarrow R}$ be the subobject of invertible elements and  let the following square
\[\xymatrix{
U_+ \ar[d] \ar[r] & \pi_0 \final \ar[d]^-{\pi_0 1} \\
U \ar[r] & \pi_0 U
}\]
be a pullback, where ${1 : \final \rightarrow U}$ is the unit of $R$ as a subobject of $U$.
Since $U$ is a subgroup of $R$, and $\pi_0$ preserves products, ${\pi_0 U}$ is also a group and ${U_+ \rightarrow U}$ is the kernel of the group morphism ${U \rightarrow \pi_0 U}$.

The map ${\pi_0 1 : \final \rightarrow \pi_0 U}$ may be an isomorphism and, in this case, ${U_+ \rightarrow U}$ is also an isomorphism, of course.
We are mainly interested in contexts where this is not the case.

\begin{definition}\label{DefBidirectional}
{\em 
The monoid $R$ is {\em bi-directional} if  ${\pi_0 U = \final + \final}$.
}
\end{definition}

So, assuming that the monoid $R$ of Euler reals underlies a ring, we may apply Proposition~\ref{Prop2inLawvere2011} to the subgroup ${U_+ \subseteq R}$ in order to obtain a subrig ${M \subseteq R}$.  The `real'  intuition suggests that it is not unnatural to require or expect that ${1  + U_+ \subseteq U_+}$; in other words, ${1 \in A \subseteq R}$.  Then the first item of Lemma~\ref{LemAigualM} implies that ${-1  \not\in M}$, unless ${0 = 1}$ in $R$.

Readers are invited to compare the above with the efforts in \cite{KockSDG2ed,MoerdijkReyesBook} to prove that the pre-orders defined on certain rings of line type there are compatible with the ring operations. We should also remark that some of those efforts will play a role below when the time comes to give a simple description of the subrigs ${M \rightarrow R}$ derived radically synthetically.

\section{A  bi-directional monoid of Euler reals}
\label{SecFirstExample}

Let ${p : \calE \rightarrow \calS}$ be a geometric morphism.
Recall that $p$ is {\em hyperconnected} if ${p^* : \calS \rightarrow \calE}$ is fully faithful and the counit  $\beta$  of ${p^* \dashv p_*}$ is monic. Intuitively, $\calE$ is a category of spaces, ${p^*}$ is the full subcategory of discrete spaces, and the right adjoint ${p_* : \calE \rightarrow \calS}$ sends a space to the associated discrete space of points. For a space $X$, the monic ${\beta_X : p^*(p_* X) \rightarrow X}$ may be thought of as the discrete subspace of points.

In perspective, we may say that $p$ is  {\em pre-cohesive} if it is hyperconnected, $p^*$ is cartesian closed and ${p_*}$ preserves coequalizers.
It follows  \cite{Menni2021} that ${p_*}$ has a (necessarily fully faithful) right adjoint $p^!$ and that ${p^*}$ has a left adjoint ${ p_!  : \calE \rightarrow \calS}$ that preserves finite products. Hence, $p$ is pre-cohesive if and only if ${p^* \dashv p_*}$ extends to a string of adjoints
${p_!\dashv p^* \dashv p_* \dashv p^!}$ such that ${p^*}$ is fully faithful, the counit of ${p^* \dashv p_*}$ is monic and the leftmost adjoint $p_!$ preserves finite products. It follows that the reflector ${\sigma_X : X \rightarrow p^* (p_! X) = \pi_0  X}$ is epic.
Sometimes we say  that $\calE$ is pre-cohesive over $\calS$.

For any pre-cohesive $p$, the fact that the  canonical  ${p_* \sigma : p_* \rightarrow p_* p^* p_! \cong p_!}$ is an epimorphism is consistent with the intuition that every connected component  has a point.
In particular, if  ${p_* X = \final }$ then ${p_! X = \final}$. In other words, if the space $X$ has a unique point then it is connected.

The purpose of this section is to build a pre-cohesive topos ${p : \calE \rightarrow \Set}$ and a (necessarily connected) object $T$ in $\calE$ such that ${p_* T = \final}$, and such that the resulting monoid $R$ of Euler reals is bi-directional.
Moreover, it will be evident from the construction that $R$ underlies a ring of line-type.

Pre-cohesion will follow from the next result borrowed from \cite{Johnstone2011}. (See also \cite[Proposition~{2.10}]{MenniExercise} for a statement consistent with our terminology.)

\begin{proposition}\label{PropJohnstone} Let $\calC$ be a small category with terminal object. Then ${\Psh{\calC} \rightarrow \Set}$ is pre-cohesive if and only if every object in $\calC$ has a point.
\end{proposition}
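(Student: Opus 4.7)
The plan is to unpack the definition of pre-cohesion into its four clauses (namely $p^*$ fully faithful, the counit of $p^* \dashv p_*$ monic, $p_!$ preserving finite products, and $p_*$ preserving coequalizers) and verify each, observing that under the hypothesis that $\calC$ has a terminal object $\final$, three of them are automatic and the whole content of the equivalence lies in the monicity of the counit. Recall that for any small $\calC$, $p^* : \Set \to \Psh{\calC}$ is the constant-presheaf functor, $p_*$ is given by $p_* X = \Psh{\calC}(\final, X) = X(\final)$ (since $\yy(\final)$ is terminal in $\Psh{\calC}$), and $p_! X$ is the colimit of $X$ over $\calC^{\op}$.

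First I would dispatch the three easy ingredients: the identity $p_*(p^* S) = (p^* S)(\final) = S$ exhibits $p^*$ as fully faithful; evaluation at an object preserves all colimits, so $p_*$ preserves coequalizers (and in fact has a right adjoint $p^!$); and $\calC^{\op}$ has an initial object, hence is filtered, hence $p_! = \mathrm{colim}_{\calC^{\op}}$ preserves finite limits, in particular finite products. Thus pre-cohesion reduces to asking that the counit $\epsilon_X : p^* p_* X \to X$ be a monomorphism for every presheaf $X$; at stage $c \in \calC$ this counit is simply the restriction map $X(!_c) : X(\final) \to X(c)$ along the unique $!_c : c \to \final$.

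For the \emph{if} direction, a point $e_c : \final \to c$ satisfies $!_c \circ e_c = \mathrm{id}_\final$, so $X(e_c) \circ X(!_c) = \mathrm{id}_{X(\final)}$ and $X(!_c)$ is a split monomorphism; therefore $\epsilon_X$ is pointwise monic and hence monic. For the \emph{only if} direction, suppose some $c \in \calC$ has no point, i.e.\ $\calC(\final, c) = \emptyset$. I would construct a witness $X$ as the coequalizer in $\Psh{\calC}$ of the two parallel maps $\yy(c) \rightrightarrows \yy(\final) + \yy(\final)$ obtained by postcomposing the unique $\yy(!_c) : \yy(c) \to \yy(\final)$ with the two coproduct inclusions. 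Since (co)limits of presheaves are pointwise, $X(d)$ is the two-element set $(\yy(\final) + \yy(\final))(d) = \{(!_d, 0), (!_d, 1)\}$ quotiented by the relation identifying both copies whenever $\calC(d, c) \neq \emptyset$. At $d = \final$ no identification occurs so $X(\final)$ has two elements, while at $d = c$ the identity $\mathrm{id}_c$ forces identification so $X(c)$ is a singleton; hence $X(!_c)$ fails to be injective and $\epsilon_X$ fails to be monic.

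The main obstacle is the \emph{only if} direction: because a representable $\yy(d)$ already has $\yy(d)(\final) = \calC(\final, d)$ possibly empty, the counit at representables can be trivially monic even when some $c$ has no point; the coequalizer construction above is designed precisely to convert the pointlessness of $c$ into an actual non-injectivity at $\final$.
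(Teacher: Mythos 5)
Your ``only if'' direction and the three routine verifications (full faithfulness of $p^*$, the pointwise split-mono argument for the counit when every object has a point, and the fact that evaluation at $\final$ preserves colimits and hence admits a right adjoint $p^!$) are fine; the explicit coequalizer witnessing failure of the Nullstellensatz when some $c$ has no point is correct, if a bit more elaborate than necessary (one can simply note that $\yy(c)$ is connected, since $\calC/c$ has a terminal object, yet has no global section, contradicting the epimorphy of $p_*\sigma\colon p_*\to p_!$ noted in the paper). The genuine gap is in the ``if'' direction, at the step you declare automatic: the claim that $p_!$ preserves finite products because ``$\calC^{\op}$ has an initial object, hence is filtered'' is false on both counts. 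An initial object makes a category \emph{cofiltered}; it is a terminal object that yields filteredness, so nothing of the sort follows from the hypothesis on $\calC$. Worse, the conclusion itself fails without the points hypothesis: take $\calC$ to be the three-object category ${a \rightarrow \final \leftarrow b}$ with no non-identity maps besides the two displayed. Then ${\yy(a) \times \yy(b)}$ is the initial presheaf, so ${p_!(\yy(a)\times\yy(b)) = \emptyset}$, while ${p_!\yy(a) \times p_!\yy(b) = \final}$. So product preservation of $p_!$ is not a formal consequence of $\calC$ having a terminal object; it is precisely the part of the theorem where the hypothesis that every object has a point must be used a second time, and your reduction of pre-cohesion to monicity of the counit alone does not hold.

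To repair it, show that under the hypotheses the colimit functor ${p_! = \mathrm{colim}_{\calC^{\op}}}$ preserves binary products of representables and then extend to arbitrary presheaves by writing them as colimits of representables (products in $\Set$ and in $\Psh{\calC}$ commute with colimits in each variable, and $p_!$ preserves colimits). For representables the point is that the category of spans ${c \leftarrow e \rightarrow d}$, i.e.\ the category of elements of ${\yy(c)\times\yy(d)}$, is nonempty and connected: nonempty because points of $c$ and $d$ give a span through $\final$; connected because any span ${(e,f,g)}$ receives a map from the point-span ${(\final, fp, gp)}$ for any point $p$ of $e$, and two point-spans ${(\final,a,b)}$, ${(\final,a',b)}$ sharing a leg both map into ${(c,\mathrm{id}_c, b\circ {!_c})}$ (and symmetrically in the other leg). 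This is exactly siftedness of $\calC^{\op}$, which is what finite-product preservation needs (full filteredness is neither available nor required). Alternatively, you may simply invoke the result of Johnstone's paper cited for this proposition, where product preservation of $p_!$ is derived from locality plus the Nullstellensatz; note that the paper itself does not prove the proposition but quotes it from that source.
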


Now recall Remark~\ref{RemOnSiteOfConnectedObjectsWithPoint} and let ${\calC \rightarrow \Aff_{\Cinfty}}$ be the full subcategory of connected objects that have a point.
The category $\calC$ is essentially small but contains many objects of interest.
It certainly has a terminal object. Also,  germ-determined $\Cinfty$-rings have a copoint almost by definition (\cite[Exercise~{III.7.1}]{KockSDG2ed}), so every non-trivial germ-determined  finitely-generated $\Cinfty$-ring  with exactly two idempotents determines an object in $\calC$.

In particular, for any manifold $M$,  ${\Cinfty(M)}$ is finitely-presentable and germ-determined (in fact, point-determined) by \cite[Theorem~{III.6.6} and Corollary~{III.5.10}]{KockSDG2ed}.
Moreover, if $M$ is  connected then ${\Cinfty(M)}$ has exactly two idempotents by the Intermediate Value Theorem.
Altogether, for any connected manifold $M$, ${\Cinfty(M)}$ determines an object of $\calC$. 

Also, every Weil algebra (over $\Real$) is a finitely presentable $\Cinfty$-ring by \cite[Proposition~{III.5.11}]{KockSDG2ed} and hence determines an object in $\calC$.

Let $T$ be the object in $\calC$ determined by the Weil algebra $\Real[\epsilon]$.
Of course, it has a unique point, and we denote it by ${0 : \final \rightarrow T}$.

\begin{lemma}\label{LemProductsWithT}  For any $X$ in $\calC$, the product ${X \times T}$ in $\Aff_{\Cinfty}$ is also in $\calC$.
\end{lemma}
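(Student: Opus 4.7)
First, I would dispatch the existence of a point: since $X \in \calC$ has a point $x \colon \final \to X$ and $T$ has the specified point $0 \colon \final \to T$, the pairing $\twopl{x}{0} \colon \final \to X \times T$ is a point of the product.

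For connectedness, write $X = \Spec A$, so that $X \times T = \Spec B$ where $B$ denotes the coproduct of $A$ and $\Real[\epsilon]$ in $\inftyRing$. By the description of direct factors in terms of idempotents given inside the proof of Proposition~\ref{PropInftyRingIsCoextensive}, it is enough to show that $B$ admits only the two trivial idempotents $0$ and $1$.

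My plan is to realise $B$ as a square-zero extension of $A$ and lift idempotents. The $\Cinfty$-ring map $\Real[\epsilon] \to \Real$ sending $\epsilon \mapsto 0$ induces, together with the identity on $A$, a $\Cinfty$-ring retraction $\pi \colon B \to A$ split by the coprojection $s \colon A \to B$. Let $I \subseteq B$ be the ring-theoretic ideal generated by the image of $\epsilon$; since $\epsilon^2 = 0$ in $\Real[\epsilon]$, we have $I^2 = 0$. Invoking Lemma~\ref{LemIdealsInInftyRings} and the universal property of the coproduct, I would identify the $\Cinfty$-ring quotient $B/I$ with $A$, whence $I = \ker \pi$ is a nilpotent ideal.

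Finally, for any idempotent $e \in B$, the image $\pi(e) \in A$ is idempotent and, by connectedness of $X$, equals either $0$ or $1$. In the first case $e \in I$, so $e = e^2 \in I^2 = \{0\}$ forces $e = 0$; in the second, the same reasoning applied to $1-e$ yields $e = 1$. The main point requiring care is the identification $B/I \cong A$, where the distinction between ordinary ring quotients and $\Cinfty$-ring quotients matters, but this is precisely what Lemma~\ref{LemIdealsInInftyRings} is designed to handle.
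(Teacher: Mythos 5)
Your argument is correct, but it takes a genuinely different route from the paper. The paper simply invokes Kock's Theorem~III.5.3 (the forgetful functor ${\inftyRing \rightarrow \Real/\Ring}$ reflects coproducts with Weil algebras), so that the coproduct is concretely ${A\otimes_{\infty}\Real[\epsilon] = A[\epsilon]}$, from which finite generation, the copoint and the ``exactly two idempotents'' condition are all read off at once. You avoid that analytic input: you only use the universal property of the coproduct, initiality of $\Real$, and Lemma~\ref{LemIdealsInInftyRings} to exhibit the coproduct $B$ as a square-zero extension of $A$ (retraction $\pi$ with ${\ker\pi = (\bar\epsilon)}$, ${(\bar\epsilon)^2 = 0}$), and then lift idempotents; this is more self-contained and generalizes verbatim to coproducts with any Weil algebra, since the augmentation ideal is nilpotent. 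Two points you leave implicit deserve a line each: (i) your identification ${X\times T = \Spec B}$ with $B$ the coproduct in $\inftyRing$ requires knowing that the coproduct of finitely generated $\Cinfty$-rings is again finitely generated (in the paper this is immediate from ${B = A[\epsilon]}$; in your setting it follows formally from ${\Cinfty(\Real^m)\otimes_{\infty}\Cinfty(\Real^n)=\Cinfty(\Real^{m+n})}$ together with Lemma~\ref{LemIdealsInInftyRings} for quotients); and (ii) connectedness means \emph{exactly} two idempotents, so you should note ${0\neq 1}$ in $B$, which is immediate since $\pi$ retracts $B$ onto the nontrivial $A$. Also, the step ``${B/I\cong A}$, whence ${I=\ker\pi}$'' should be spelled out as: $\pi$ kills $\bar\epsilon$, and any map out of $B$ killing $\bar\epsilon$ factors uniquely through $\pi$ (its restriction to $\Real[\epsilon]$ factors through ${\Real[\epsilon]/(\epsilon)\cong\Real}$, which is initial), so $\pi$ has the same universal property as the quotient ${B\rightarrow B/I}$ provided by Lemma~\ref{LemIdealsInInftyRings}; this does go through as you sketch. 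One thing the paper's route buys that yours does not is the explicit description ${A[\epsilon]}$ itself, which is reused later (e.g.\ in the KL computation in Proposition~\ref{Propbi-directional}).
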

\begin{proof}
The forgetful functor ${\inftyRing \rightarrow \Real/\Ring}$ reflects coproducts with Weil algebras by \cite[Theorem~{III.5.3}]{KockSDG2ed}.
  More precisely, if $A$ is a $\Cinfty$-ring and  $W$ is a Weil algebra then there is a unique $\Cinfty$-ring structure on  ${A \otimes_{\Real} W}$ extending its $\Real$-algebra structure  such that ${A \rightarrow A\otimes_{\Real} W \leftarrow W}$ is a coproduct in $\inftyRing$.
In particular,  ${A\otimes_{\infty} \Real[\epsilon] = A[\epsilon]}$. So, if $A$ is finitely generated, or has exactly two idempotents, or has a copoint, then so does ${A[\epsilon]}$. Therefore, if $A$ determines an object in $\calC$ then so does the coproduct ${A\otimes_{\infty} \Real[\epsilon]}$. \end{proof}

By the remarks above, the object ${R = \Spec(\Cinfty(\Real))}$ is in the subcategory ${\calC \rightarrow \Aff_{\Cinfty}}$.
The product 
\[ R\times R = \Spec(\Cinfty(\Real) \otimes_{\infty} \Cinfty(\Real)) = \Spec(\Cinfty(\Real^2)) \]
 is also in $\calC$, so $R$ is a ring object in $\calC$.

\begin{proposition}\label{Propbi-directional} 
The canonical ${\Psh{\calC} \rightarrow \Set}$ is pre-cohesive and the monoid of Euler reals determined by $T$ in $\Psh{\calC}$ is bi-directional. Moreover, that monoid coincides with (the multiplicative part of) the ring $R$ which is  connected and satisfies the KL-axiom. 
\end{proposition}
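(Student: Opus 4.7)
The plan is to reduce every claim to a corresponding fact in $\Aff_{\Cinfty}$ and transfer it along the Yoneda embedding $y : \calC \to \Psh{\calC}$. Pre-cohesion is immediate from Proposition~\ref{PropJohnstone}, since every object of $\calC$ has a point by construction (and $\calC$ has a terminal object, namely $\Spec(\Real)$). Connectedness of $R$ in $\Psh{\calC}$ is also automatic: for any representable $yX$ and any set $S$, $\Hom_{\Psh{\calC}}(yX, p^*S)=S$ by the Yoneda lemma, and the adjunction $p_!\dashv p^*$ then forces $\pi_0(yX) = p_!(yX) = \final$.

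Next, I would verify that the construction of the Euler reals goes through Yoneda. Since $\calC$ is closed under products with $T$ by Lemma~\ref{LemProductsWithT}, and since the $\Cinfty$-analogue of the calculation in Example~\ref{ExAffineKschemes} produces a finitely generated, connected, pointed $\Cinfty$-ring representing $T^T$ in $\Aff_{\Cinfty}$, we get $T^T_{\Aff_{\Cinfty}} \in \calC$; so $y$ preserves the exponential $T^T$. Together with the preservation of finite limits, this transfers both the Euler-reals pullback of Definition~\ref{DefEulerReals} and the KL isomorphism $R \times R \cong R^T$ of Example~\ref{ExAffineCinftySchemes} to $\Psh{\calC}$. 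By Proposition~\ref{PropKLimpliesEuler} applied in $\Aff_{\Cinfty}$ and transferred, the monoid of Euler reals in $\Psh{\calC}$ coincides with (the multiplicative part of) $yR = y(\Spec(\Cinfty(\Real)))$, which is a ring of line type.

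The substantive step is bi-directionality. My plan is to identify the subobject of invertibles $U \hookrightarrow R$ in $\Psh{\calC}$ as the Yoneda image of the corresponding subobject in $\Aff_{\Cinfty}$. The pullback $V$ of $\cdot : R \times R \to R$ along $1 : \final \to R$ in $\Aff_{\Cinfty}$ is $\Spec(\Cinfty(\Real\setminus\{0\}))$, and its first projection $V \to R$ corresponds to the localization $\Cinfty(\Real) \to \Cinfty(\Real\setminus\{0\})$, which is an epimorphism in $\inftyRing$, hence a monomorphism in $\Aff_{\Cinfty}$. Since $y$ preserves monos and limits, $yV \hookrightarrow yR$ is a subobject in $\Psh{\calC}$ and agrees with $U$. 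Example~\ref{ExDisconnectedU} identifies $V$ with the coproduct $A+B$ in $\Aff_{\Cinfty}$, where $A = \Spec(\Cinfty(-\infty,0))$ and $B = \Spec(\Cinfty(0,\infty))$. Both $A,B$ lie in $\calC$: they are finitely generated (they are $\Cinfty$-rings of connected smooth manifolds), have exactly two idempotents by the Intermediate Value Theorem, and clearly have points.

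The crux is then showing $y(A+B) = yA + yB$ in $\Psh{\calC}$, which is where the main difficulty sits since $y$ does not preserve coproducts in general. This will follow from the fact that every object of $\calC$ is connected in $\Aff_{\Cinfty}$: for $X \in \calC$, pulling any $X \to A+B$ back along $A \hookrightarrow A+B$ and $B \hookrightarrow A+B$ yields two complemented subobjects of $X$ summing to $X$, forcing the map to factor through a unique summand. Hence $\Hom_\calC(X, A+B) = \Hom_\calC(X,A) + \Hom_\calC(X,B)$ for every $X \in \calC$, so $U = yA + yB$ in $\Psh{\calC}$. Because $\pi_0$ preserves coproducts (as a left adjoint) and sends each representable to $\final$, we conclude $\pi_0 U = \final + \final$, i.e. $R$ is bi-directional.
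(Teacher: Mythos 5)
Your proposal is correct and takes essentially the same route as the paper: pre-cohesion via Proposition~\ref{PropJohnstone}, the KL/Euler-reals identification resting on closure of $\calC$ under products with $T$ (Lemma~\ref{LemProductsWithT}) together with Proposition~\ref{PropKLimpliesEuler}, and bi-directionality by exhibiting $U$ as the coproduct in $\Psh{\calC}$ of the two representables ${\Spec(\Cinfty(-\infty,0))}$ and ${\Spec(\Cinfty(0,\infty))}$ coming from Example~\ref{ExDisconnectedU}. The differences are only presentational: the paper verifies the KL-axiom by the direct hom-computation with ${A[\epsilon]}$ inside $\Psh{\calC}$ instead of transferring exponentials from $\Aff_{\Cinfty}$, and it leaves implicit the connectedness arguments (that maps from objects of $\calC$ into the coproduct split, and that $\pi_0$ of a representable is $\final$) which you spell out explicitly.
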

\begin{proof}
By definition, the category $\calC$ has a terminal object and every object in it has a point, so   ${\Psh{\calC} \rightarrow \Set}$ is pre-cohesive by Proposition~\ref{PropJohnstone}.

The quotient ${\Cinfty(\Real) \rightarrow \Real[\epsilon]}$ in $\inftyRing$ determines a monomorphism ${T\rightarrow R}$ in $\calC$. In fact, it is the subobject of elements of square zero.  Moreover, by Lemma~\ref{LemProductsWithT} we can repeat the usual proof that the KL-axiom holds.
Indeed, for any ${\Spec A}$ in $\calC$, 
\[ \Psh{\calC}(\Spec A, R^T) \cong \Psh{\calC}((\Spec A) \times T, R) \cong \Psh{\calC}(\Spec ({A[\epsilon]}), R) \cong \]
\[ \inftyRing(\Cinfty(\Real), A[\epsilon]) \cong A\times A \cong \inftyRing(\Cinfty(\Real), A) \times  \inftyRing(\Cinfty(\Real), A) \cong \]
\[ \cong \Psh{\calC}(\Spec A, R) \times \Psh{\calC}(\Spec A, R) \cong \Psh{\calC}(\Spec A, R\times R) \]
 as usual. Proposition~\ref{PropKLimpliesEuler}  implies that the monoid of Euler reals determined by $T$ coincides with the multiplicative part of $R$.
 Moreover, $R$  is connected because it is representable.

 As observed in Example~\ref{ExDisconnectedU}, the  subobject ${ U \rightarrow R}$ of invertibles  in $\Aff_{\Cinfty}$ coincides with  ${\Spec(\Cinfty(-\infty,0)) + \Spec(\Cinfty(0, \infty))  \rightarrow \Spec \Real = R}$.
 The domain of this subobject is not an object in $\calC$, but the summands are.
 Then  
 $${U = \Spec(\Cinfty(-\infty,0)) + \Spec(\Cinfty(0, \infty)) }$$
  in $\Psh{\calC}$.
So ${p_! U  = p_! [\Spec( \Cinfty(-\infty,0)) + \Spec(\Cinfty(0, \infty))]  = \final + \final}$ because $U$ in $\Psh{\calC}$ is a coproduct of two representables.
  \end{proof}

It is clear  from the proof of Proposition~\ref{Propbi-directional}  that ${U_+ = \Spec(\Cinfty(0,\infty))}$ in $\Psh{\calC}$.
Then  ${1 + U_+  \subseteq U_+}$  and so, by the remarks following Lemma~\ref{LemAigualM}, we obtain a subrig ${M \subseteq R}$ inside the complement of ${-1 : \final \rightarrow R}$. I have not found an illuminating expression of this subrig though. In the next section we consider a smaller topos, also with a bi-directional $R$, but where ${M \subseteq R}$ has a simple description.

\section{W-determination and the Positiv-stellen-satz}
\label{SecW}

In this section we construct another bi-directional ring of line type but in a topos where the induced pre-order is easier to describe explicitly in terms of the site.

\begin{def}\label{DefWdetermined} 
{\em 
A $\Cinfty$-ring $A$ is {\em W-determined}  if  the family of maps ${A \rightarrow W}$ in $\inftyRing$ with Weil codomain is jointly monic.
}
\end{def}

W-determined $\Cinfty$-rings are called {\em near-point determined} in \cite{MoerdijkReyesBook}.

Let ${\calC_{W} \rightarrow \calC}$ be the full subcategory induced by the objects in $\calC$ that are W-determined as $\Cinfty$-rings.
It follows from the discussion following \cite[Theorem~{III.9.4}]{KockSDG2ed} that every connected manifold with boundary determines an object in $\calC_W$. Moreover, this assignment is functorial.

\begin{proposition}\label{PropAnotherbi-directional}
 The canonical ${\Psh{\calC_W} \rightarrow \Set}$ is pre-cohesive and the monoid of Euler reals induced  by $T$ in $\Psh{\calC}$ is bi-directional. Moreover, this monoid coincides with (the multiplicative part of) the ring $R$ which is  connected and satisfies the KL-axiom. 
\end{proposition}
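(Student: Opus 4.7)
The plan is to transfer the proof of Proposition~\ref{Propbi-directional} to $\calC_W$ by checking that all the objects used in that argument ($T$, $R$, the two summands of $U$, and products $X \times T$) still live in the smaller full subcategory $\calC_W \hookrightarrow \calC$. Once these closure properties are in place, the chains of isomorphisms that established pre-cohesion, the KL-axiom, connectedness of $R$, and bi-directionality carry over verbatim.

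First I would verify that $\calC_W$ satisfies the hypothesis of Proposition~\ref{PropJohnstone}. The terminal object $\Spec(\Real)$ lies in $\calC_W$ because $\Real$ is itself a Weil algebra, hence trivially W-determined (the identity $\Real \to \Real$ is already monic). Every object of $\calC_W$ has a point because it is in $\calC$ and $\calC_W \to \calC$ is a full inclusion. Hence the canonical geometric morphism ${\Psh{\calC_W} \to \Set}$ is pre-cohesive.

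Next I would check that the specific objects needed are all W-determined. The ring $\Real[\epsilon]$ is a Weil algebra, so $T \in \calC_W$. The rings $\Cinfty(\Real)$, $\Cinfty(-\infty,0)$ and $\Cinfty(0,\infty)$ are point-determined by \cite[Corollary~{III.5.10}]{KockSDG2ed} (each is $\Cinfty(M)$ for a connected manifold), and since every evaluation at a point is a map to the Weil algebra $\Real$, point-determination implies W-determination. Therefore $R$, $\Spec(\Cinfty(-\infty,0))$ and $\Spec(\Cinfty(0,\infty))$ all lie in $\calC_W$. The analogue of Lemma~\ref{LemProductsWithT} holds: if $A \in \calC_W$ then, since $A \otimes_{\Real} \Real[\epsilon] = A[\epsilon]$, any pair of distinct elements $a_0 + a_1\epsilon$ and $b_0 + b_1\epsilon$ of $A[\epsilon]$ differs in some component $a_i \neq b_i$, and a W-valued map $A \to W$ separating them extends via $\otimes_{\Real}\Real[\epsilon]$ to a map $A[\epsilon] \to W[\epsilon]$, whose codomain is again a Weil algebra; so $A[\epsilon]$ is W-determined.

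With these closure properties, the verbatim transfer works: the chain of natural isomorphisms in the proof of Proposition~\ref{Propbi-directional} establishes ${R^T \cong R \times R}$ in $\Psh{\calC_W}$ (all the schemes involved are representable in $\calC_W$), so $R$ is of line type and Proposition~\ref{PropKLimpliesEuler} identifies its multiplicative monoid with the monoid of Euler reals determined by $T$. Connectedness of $R$ is automatic from representability. Finally, since both summands of $U = \Spec(\Cinfty(-\infty,0)) + \Spec(\Cinfty(0,\infty))$ are representable in $\calC_W$, the finite-product-preserving left adjoint $p_!$ sends $U$ to $\final + \final$, giving bi-directionality. The only non-routine step is the closure of $\calC_W$ under $X \mapsto X \times T$, which is where preservation of W-determination under tensoring with a Weil algebra is essential; everything else is essentially a bookkeeping exercise inherited from Proposition~\ref{Propbi-directional}.
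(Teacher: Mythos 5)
Your proposal is correct and takes essentially the same route as the paper: verify that $\calC_W$ contains the relevant representables (those coming from Weil algebras and from connected manifolds, hence $T$, $R$ and the two summands of $U$) and is closed under products with $T$, and then transfer the proof of Proposition~\ref{Propbi-directional} verbatim. The only cosmetic difference is that where the paper cites \cite[Proposition~{I.4.6} and Lemma~{II.1.15}]{MoerdijkReyesBook} for closure of W-determined rings under ${(-)\otimes_{\infty}\Real[\epsilon]}$, you prove this directly by extending a separating Weil-valued map ${A \rightarrow W}$ to ${A[\epsilon] \rightarrow W[\epsilon]}$, which is correct and is essentially the standard argument behind that citation.
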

\begin{proof}
It is enough to check that ${\calC_{W}}$ has all the properties needed to mimic  Proposition~\ref{Propbi-directional}. 
It certainly contains all the objects induced by connected manifolds and also those induced by Weil algebras.
In particular, it contains the object $T$.
So the main problem is to extend Lemma~\ref{LemProductsWithT} by showing that ${\calC_W}$ is closed under products with $T$, but this follows from \cite[Proposition~{I.4.6} and Lemma~{II.1.15}]{MoerdijkReyesBook}.
\end{proof}

Let  ${\Cinfty{[0,\infty)} = \Cinfty(\Real)/I}$ where ${I \subseteq \Cinfty(\Real)}$ is the ideal of functions that vanish on ${[0, \infty) \subseteq \Real}$.

\begin{proposition}[Positiv-Stellen-Satz.]\label{PropPositivStellenSatz}
Let $m$ be a finite set and let ${q :  \Cinfty(\Real^m) \rightarrow A}$ be a regular epimorphism in $\inftyRing$ with W-determined codomain.
If we let   ${J \subseteq \Real^m}$ be the kernel of  $q$ then, for  any smooth ${g : \Real^m \rightarrow \Real}$, the following are equivalent:
\begin{enumerate}
\item The restriction of the smooth ${g : \Real^m \rightarrow \Real}$ to ${Z(J) \subseteq \Real^m}$ factors through ${[0,\infty) \subseteq \Real}$, where ${Z(J) = \{ x\in \Real^m \mid (\forall f \in J) (f x = 0)\}}$.
\item The composite 
\[\xymatrix{\Cinfty(\Real) \ar[r]^-{\Cinfty g} & \Cinfty(\Real^m) \ar[r]^-q & A}\] 
factors through ${\Cinfty(\Real) \rightarrow \Cinfty[0,\infty)}$.
\end{enumerate}
\end{proposition}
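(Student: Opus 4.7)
The plan is to prove the two implications separately: $(2) \Rightarrow (1)$ is a routine evaluation argument, while $(1) \Rightarrow (2)$ is the substantive content and uses W-determinedness together with the structure of morphisms into Weil algebras.

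For $(2) \Rightarrow (1)$, fix $x \in Z(J)$. Evaluation $\mathrm{ev}_x : \Cinfty(\Real^m) \to \Real$ vanishes on $J$, so by Lemma~\ref{LemIdealsInInftyRings} it factors through $q$ as a unique $\overline{x} : A \to \Real$. Post-composing the hypothesised factorisation with $\overline{x}$ yields a morphism $\Cinfty(\Real) \to \Real$ that factors through $\Cinfty{[0,\infty)}$. Since $\Cinfty{[0,\infty)}$ is point-determined and its $\Real$-valued characters are precisely evaluations at points of $[0,\infty)$, evaluating at $\mathrm{id}_\Real \in \Cinfty(\Real)$ recovers $g(x)$ and forces $g(x) \in [0,\infty)$.

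For $(1) \Rightarrow (2)$, Lemma~\ref{LemIdealsInInftyRings} identifies $\Cinfty{[0,\infty)} = \Cinfty(\Real)/I$ as $\Cinfty$-rings, where $I \subseteq \Cinfty(\Real)$ is the ideal of smooth functions vanishing on $[0,\infty)$. The desired factorisation is therefore equivalent to the assertion that $q(h \circ g) = 0$ in $A$ for every $h \in I$. Since $A$ is W-determined, it suffices to verify $\phi(q(h \circ g)) = 0$ for each morphism $\phi : A \to W$ into a Weil algebra $W$. Fix such a $\phi$ and let $\psi' = \phi \circ q : \Cinfty(\Real^m) \to W$. Composing $\psi'$ with the residue projection $W \to W/\mathfrak{m}_W = \Real$ is a character of $\Cinfty(\Real^m)$, i.e.\ evaluation at some $x \in \Real^m$; since $\psi'$ factors through $q$, this character kills $J$ and thus $x \in Z(J)$. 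Hypothesis $(1)$ yields $g(x) \in [0,\infty)$. Consider $\psi = \psi' \circ (\Cinfty g) : \Cinfty(\Real) \to W$, whose residue character sends $\mathrm{id}_\Real$ to $g(x)$. The standard Hadamard--Taylor description of morphisms from $\Cinfty(\Real)$ into a Weil algebra expresses $\psi(h)$ as a finite polynomial in the derivatives of $h$ at $g(x)$. All of these derivatives vanish: for $g(x) > 0$, $h$ is identically zero near $g(x)$; for $g(x) = 0$, all right-derivatives are zero and smoothness of $h$ forces the left-derivatives to agree. Hence $\psi(h) = 0$, completing the verification.

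The main obstacle is the boundary case $g(x) = 0$, which distinguishes this statement from its analogue over an open half-line: one must invoke the classical analytic fact that a smooth function flat to the right at $0$ has vanishing Taylor jet at $0$, so that no Weil-algebra probe based at $0$ can detect $h$. Once this analytic input is granted, the rest is bookkeeping; W-determinedness is used solely to reduce the vanishing statement in $A$ to a family of jet-level vanishings in Weil algebras, and the identification of $\Cinfty{[0,\infty)}$ with a $\Cinfty$-ring quotient is supplied by Lemma~\ref{LemIdealsInInftyRings}.
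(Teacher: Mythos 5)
Your proof is correct, and it follows the same route as the source the paper defers to: the paper's ``proof'' is just the instruction to use the proof of Kock's Lemma~III.11.4, whose argument is exactly yours (reduce via W-determinedness to probes ${A \rightarrow W}$ into Weil algebras, identify the residue character of $\Cinfty(\Real^m) \rightarrow W$ with evaluation at a point of $Z(J)$, and kill each $h$ vanishing on ${[0,\infty)}$ via the Taylor/Hadamard description of maps ${\Cinfty(\Real) \rightarrow W}$ together with flatness of $h$ at the boundary point). So your write-up is a correct, self-contained expansion of the paper's one-line citation, including the easy converse direction via characters of ${\Cinfty[0,\infty)}$ being evaluations at points of ${[0,\infty)}$.
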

\begin{proof}
Use the proof of \cite[Lemma~{III.11.4}]{KockSDG2ed}.
\end{proof}

The $\Cinfty$-ring ${\Cinfty{[0,\infty)}}$  is  W-determined  by the remarks in \cite[p.185]{KockSDG2ed}.
The corresponding object in $\calC_W$ will  be denoted by $H$.
So the quotient ${\Cinfty(\Real) \rightarrow \Cinfty[0,\infty)}$ induces a  monomorphism ${H \rightarrow R}$  in $\calC_W$.

Let ${\Gamma = \calC_W(\final, -) : \calC_W \rightarrow \Set}$ be the usual `points' functor.

\begin{corollary}\label{CorSpatialPositivStellenSatz}
For every ${v : X \rightarrow R}$ in $\calC_W$, $v$ factors through the subobject  ${H \rightarrow R}$ if and only if ${\Gamma v : \Gamma X \rightarrow \Gamma R = \Real}$ factors through ${[0,\infty) \rightarrow \Real}$.
\end{corollary}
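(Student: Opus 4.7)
The plan is to translate the statement into algebra via Yoneda/opposite category and then reduce directly to Proposition~\ref{PropPositivStellenSatz}. An object $X$ in $\calC_W$ corresponds to a finitely generated, W-determined $\Cinfty$-ring $A$, and the map $v : X \rightarrow R$ corresponds to a $\Cinfty$-ring map $\hat v : \Cinfty(\Real) \rightarrow A$. Factoring $v$ through $H \rightarrow R$ in $\calC_W$ is the same as factoring $\hat v$ through the quotient $\Cinfty(\Real) \rightarrow \Cinfty[0,\infty)$ in $\inftyRing$, since $H$ is by definition the object corresponding to that quotient.

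First I would pick, using finite generation of $A$, a regular epi $q : \Cinfty(\Real^m) \rightarrow A$ with kernel $J \subseteq \Cinfty(\Real^m)$. Because $\Cinfty(\Real)$ is the free $\Cinfty$-ring on one generator, the map $\hat v$ is determined by an element of $A$, which lifts through $q$ to an element of $\Cinfty(\Real^m)$, that is, to a smooth $g : \Real^m \rightarrow \Real$ with $\hat v = q \circ (\Cinfty g)$. This puts us in exactly the situation of Proposition~\ref{PropPositivStellenSatz}.

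Next I would identify $\Gamma X$ with $Z(J) \subseteq \Real^m$ and show that $\Gamma v$ is the restriction of $g$ to $Z(J)$. Points of $X$ are maps $A \rightarrow \Real$ in $\inftyRing$; pre-composing with $q$ these biject with maps $\Cinfty(\Real^m) \rightarrow \Real$ that annihilate $J$, and such maps are just evaluations $\mathrm{ev}_x$ at points $x \in \Real^m$ with $x \in Z(J)$ (a standard consequence of $\Cinfty$-ring theory for finitely generated $\Cinfty$-rings). Given $x \in Z(J)$ with associated point $\bar x : A \rightarrow \Real$, a direct computation gives
\[ \bar x \circ \hat v = \bar x \circ q \circ (\Cinfty g) = \mathrm{ev}_x \circ (\Cinfty g) = \mathrm{ev}_{g(x)}, \]
so $\Gamma v$ corresponds under the identification $\Gamma X \cong Z(J)$ to $x \mapsto g(x)$, i.e. to $g|_{Z(J)}$.

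With these identifications in place, the two conditions of the corollary translate respectively to items (2) and (1) of Proposition~\ref{PropPositivStellenSatz}, so the equivalence is immediate. The only step requiring any care is the third one, the identification of $\Gamma X$ with $Z(J)$ and of $\Gamma v$ with $g|_{Z(J)}$; everything else is essentially formal. I do not expect a serious obstacle, provided one keeps track of the fact that $A$ being W-determined (hence in particular point-determined only when it has sufficiently many points) is not actually needed for this translation, and that W-determinedness is used only through Proposition~\ref{PropPositivStellenSatz} itself.
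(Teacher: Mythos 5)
Your proposal is correct and follows essentially the same route as the paper: choose a regular epi $q : \Cinfty(\Real^m) \to A$, lift $\overline{v}$ along $q$ using projectivity/freeness of $\Cinfty(\Real)$ to get a smooth $g$, identify $\Gamma X$ with $Z(J)$ and $\Gamma v$ with $g|_{Z(J)}$, and apply Proposition~\ref{PropPositivStellenSatz}. The only difference is that you spell out explicitly the identification of points of $X$ with $Z(J)$, which the paper leaves implicit.
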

\begin{proof}
The object $X$ equals ${\Spec A}$ for some finitely generated and W-determined $\Cinfty$-ring $A$.
So there is a finite set $m$ and  a regular epimorphism ${q : \Cinfty(\Real^m) \rightarrow A}$. 
The map $v$ corresponds to a map ${\overline{v} : \Cinfty(\Real) \rightarrow A}$ and, since the domain of this map is projective, there exists a smooth ${g : \Real^m \rightarrow  \Real}$ such that the following triangle
\[\xymatrix{
\Cinfty(\Real) \ar@(d,l)[rrd]_-{\overline{v}} \ar[rr]^-{\Cinfty g} && \Cinfty(\Real^m) \ar[d]^-q \\
&& A
}\]
commutes in $\inftyRing$. 
By Proposition~\ref{PropPositivStellenSatz},  ${\overline{v}}$ factors through ${\Cinfty(\Real) \rightarrow \Cinfty{[0,\infty)}}$ if and only if  the restriction of $g$ to ${Z(J) \subseteq \Real^m}$ factors through ${[0,\infty) \subseteq \Real}$ where $J$ is the kernel of $q$. But this holds if and only if the left-bottom composite below
\[\xymatrix{
Z(J) = \Gamma X \ar[d] \ar@{.>}[r] & [0,\infty) \ar[d] \\
\Gamma R^m =\Real^m \ar[r]_-g & \Real
}\]
factors through the right subobject as depicted above, so the result follows.
\end{proof}

For a connected manifold $M$ we let ${\overline{M} = \Spec (\Cinfty(M))}$. 
To complete our calculation we need another standard result that we reformulate as follows.

\begin{lemma}\label{LemProductsWithH}
 For any connected manifold $M$, the product of $\overline{M}$ and ${H}$ exists in $\calC_W$ and it corresponds to ${\Cinfty(M\times \Real)/J}$ where $J$ is the ideal of functions that vanish on ${M \times [0,\infty)}$.
\end{lemma}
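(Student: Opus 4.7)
The plan is to identify the candidate $\Cinfty(M\times\Real)/J$ first as the W-determined reflection of the coproduct of $\Cinfty(M)$ and $\Cinfty{[0,\infty)}=\Cinfty(\Real)/I$ in $\inftyRing$, and then to verify that this quotient lies in $\calC_W$ and so provides the product of $\overline{M}$ and $H$ in $\calC_W$.

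Using the identification $\Cinfty(M)\otimes_{\infty}\Cinfty(\Real)\cong\Cinfty(M\times\Real)$ together with Lemma~\ref{LemIdealsInInftyRings}, the coproduct in $\inftyRing$ is $\Cinfty(M\times\Real)/K$, where $K$ is the ring-theoretic ideal generated by the image of $I$ under $\pi_2^{*}:\Cinfty(\Real)\to\Cinfty(M\times\Real)$ along the projection $\pi_2:M\times\Real\to\Real$. Since $\pi_2^{*}(I)\subseteq J$, there is a canonical surjection $\Cinfty(M\times\Real)/K\twoheadrightarrow\Cinfty(M\times\Real)/J$. To show this surjection is the W-reflection, I would take any $\Cinfty$-ring map $q:\Cinfty(M\times\Real)/K\to A$ into a W-determined $A$ and argue that its lift $\tilde q:\Cinfty(M\times\Real)\to A$ kills $J$. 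By W-determinedness of $A$, it suffices to check that the composite $\Cinfty(M\times\Real)\to A\to W$ kills $J$ for every map into a Weil algebra $W$; each such composite has an underlying classical point $(m_0,r_0)\in M\times\Real$, and its further precomposition with $\pi_2^{*}$ kills $I$ (since $\tilde q$ kills $K\supseteq\pi_2^{*}(I)$), so Corollary~\ref{CorSpatialPositivStellenSatz} applied to the induced map $\Spec W\to R$ in $\calC_W$ forces $r_0\in[0,\infty)$. Now for $f\in J$, the function $f$ vanishes on the open set $M\times(0,\infty)$, so all its iterated partial derivatives vanish there, and by continuity also at every boundary point $(m_0,0)$; hence $f$ has identically zero Taylor expansion at $(m_0,r_0)$ and dies in $W$, since any map into a Weil algebra depends only on finite-order jets.

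To conclude, I would verify that $\Cinfty(M\times\Real)/J$ belongs to $\calC_W$: it is W-determined by the half-space analogue of \cite[Proposition~{I.4.6}]{MoerdijkReyesBook}; it is finitely generated because $\Cinfty(M)$ is finitely presented by \cite[Theorem~{III.6.6}]{KockSDG2ed} and a quotient of a finitely generated $\Cinfty$-ring is finitely generated; it has a copoint given by any $(m_0,0)\in M\times[0,\infty)$; and it has exactly two idempotents, since any idempotent lifts to a smooth function whose restriction to $M\times[0,\infty)$ is $\{0,1\}$-valued and continuous, hence constant by connectedness of $M\times[0,\infty)$. The hard part will be the W-reflection step above, specifically the claim that a smooth function vanishing on $M\times[0,\infty)$ has zero Taylor expansion at every boundary point $(m_0,0)$; this rests on the smoothness of $f$ together with the density of $M\times(0,\infty)$ in $M\times[0,\infty)$, which guarantees that one-sided partial derivatives computed from inside agree by continuity with the two-sided ones at the boundary.
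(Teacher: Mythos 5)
Your proposal is correct and is essentially the paper's argument: the paper simply delegates the key identification (that $\Cinfty(M\times\Real)/J$ is the W-determined coproduct of $\Cinfty(M)$ and $\Cinfty[0,\infty)$, hence the product in $\calC_W$ once it is seen to be finitely generated, pointed and with exactly two idempotents) to the proof of \cite[Theorem~{III.9.5}]{KockSDG2ed}, together with the connectedness/idempotent remark you also make. Your jet-at-boundary-points verification that every map from $\Cinfty(M\times\Real)/K$ to a W-determined ring kills $J$ is just a direct proof of that cited step (and, incidentally, W-determination of $\Cinfty(M\times\Real)/J$ follows even more simply from point-determination, since $Z(J)=M\times[0,\infty)$), so the two routes coincide in substance.
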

\begin{proof}
This follows from the proof of \cite[Theorem~{III.9.5}]{KockSDG2ed} (and the fact that, for connected $M$, ${\Cinfty(M\times \Real)/J}$ has exactly two idempotents).
\end{proof}

Recall that ${U \rightarrow R}$ is the subobject of invertibles, that ${U_+ \rightarrow U}$ is the kernel of ${U \rightarrow \pi_0 U}$ and that  ${M \subseteq R}$ is the subrig determined by bi-directionality of $R$.

\begin{theorem} In the topos $\Psh{\calC_W}$, the subrig ${M \rightarrow R}$ coincides with ${H \rightarrow R}$.
\end{theorem}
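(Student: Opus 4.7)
The plan is to first establish the intermediate equality $A = H$ of subobjects of $R$---where $A$ is the additive submonoid produced from $P = U_+$ in Proposition~\ref{Prop2inLawvere2011}---and then to derive $M = H$ almost formally. The crucial tool throughout is Corollary~\ref{CorSpatialPositivStellenSatz}, which turns the question ``does $v : X \rightarrow R$ factor through $H$?'' into the pointwise question ``is $\Gamma v(y) \geq 0$ for all $y \in \Gamma X$?''.

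For $H \subseteq A$ I would work directly with the universal sum map $H \times U_+ \rightarrow R$. Identifying the $\Cinfty$-ring coproduct $\Cinfty[0,\infty) \otimes_{\infty} \Cinfty(0,\infty)$ with $\Cinfty([0,\infty) \times (0,\infty))$, this map corresponds to $\Cinfty(\Real) \rightarrow \Cinfty([0,\infty) \times (0,\infty))$ sending $\mathrm{id}_{\Real}$ to $(t,s) \mapsto t+s$; because $t+s$ takes values in $(0,\infty)$ on its domain, the map factors through the restriction $\Cinfty(\Real) \rightarrow \Cinfty(0,\infty)$. Hence $H \times U_+ \rightarrow R$ factors through $U_+$, which is the generalized-element form of the condition $a + U_+ \subseteq U_+$ for every $a : X \rightarrow H$. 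For the reverse inclusion $A \subseteq H$, I would argue by contradiction: if $a : X \rightarrow R$ lay in $A$ but $\Gamma a(y_0) = -r < 0$ for some $y_0 \in \Gamma X$, then the constant generalized element $p : X \rightarrow U_+$ of value $r/2$ would force $a+p$ to factor through $U_+ \hookrightarrow H$, yet $\Gamma(a+p)(y_0) = -r/2 < 0$ contradicts Corollary~\ref{CorSpatialPositivStellenSatz}.

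With $A = H$ in hand, the inclusion $M \subseteq H$ is immediate: the argument above applied to the constant element $1$ shows $1 \in A$, so Proposition~\ref{Prop2inLawvere2011} yields $M \subseteq A = H$. For $H \subseteq M$, take any $\lambda : X \rightarrow H$ and verify by Kripke--Joyal that $\lambda \cdot A \subseteq A$: given $f : Y \rightarrow X$ and $a : Y \rightarrow H$, the product $(\lambda \circ f) \cdot a : Y \rightarrow R$ is, at each point of $Y$, a product of non-negative reals, hence factors through $H = A$ by Corollary~\ref{CorSpatialPositivStellenSatz}.

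The main obstacle is the direction $H \subseteq A$, where one needs both the explicit description of the coproduct $\Cinfty[0,\infty) \otimes_{\infty} \Cinfty(0,\infty)$ and the observation that $(t,s) \mapsto t+s$ maps $[0,\infty) \times (0,\infty)$ into $(0,\infty)$; every other step reduces via Corollary~\ref{CorSpatialPositivStellenSatz} to a sign check on real numbers.
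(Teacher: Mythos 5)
Your proposal is correct and follows essentially the same route as the paper's proof: first establish $A=H$ as subobjects of $R$ by combining the representable product ${U_+\times H}$ (Lemma~\ref{LemProductsWithH} applied to the connected manifold ${(0,\infty)}$) with the pointwise criterion of Corollary~\ref{CorSpatialPositivStellenSatz}, and then deduce ${M=H}$ from ${1\in A}$ together with closure of $H$ under multiplication. The only cosmetic difference is in the inclusion ${H\subseteq A}$: you justify the factorization of the sum map through ${U_+}$ by an explicit $\Cinfty$-ring computation (the image of a characteristic function of ${(0,\infty)}$ is invertible because ${t+s>0}$ on ${[0,\infty)\times(0,\infty)}$, the troublesome corner being absent from the ambient ${\Real\times(0,\infty)}$), whereas the paper gets the same factorization from functoriality of the embedding of connected manifolds with boundary into $\calC_W$.
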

\begin{proof}
Consider first  
${  A = \{ a \in R \mid a + U_+ \subseteq U_+ \} = \{ a \in R \mid (\forall u \in U_+)(a + u \in U_+) \} \rightarrow R}$. 
To check that ${H \leq A}$ is enough, by Lemma~\ref{LemProductsWithH}, to show that the composite 
\[\xymatrix{
   U_+  \times H \ar[r] & R \times R \ar[r]^-{+} & R
}\]
factors through ${U_+ \rightarrow R}$, but this follows from the embedding of connected manifolds with boundary in $\calC_W$ and the fact that
the composite
\[\xymatrix{
(0,\infty) \times [0,\infty)  \ar[r] & \Real \times \Real \ar[r]^-{+} & \Real
}\]
factors through ${ (0,\infty)  \subseteq \Real}$ there.

To prove that ${A \leq H}$ let $C$ in $\calC_W$ and let ${\gamma :  C \rightarrow R}$ in $\Psh{\calC_W}$ factor through ${A \rightarrow R}$.
It follows that, for every ${c : \final \rightarrow C}$ and ${r : \final \rightarrow U_+}$, the composite
\[\xymatrix{
\final \ar[rr]^-{\twopl{r}{c}} && U_+ \times  C \ar[rr]^-{U_+ \times \gamma} && U_+ \times R \ar[r]^-{+} & R 
}\]
factors through ${U_+ \rightarrow R}$. In other words, the composite
\[\xymatrix{
(0,\infty) \times \Gamma C  \ar[rr]^-{(0,\infty) \times \Gamma \gamma} && (0,\infty) \times \Gamma R  = (0,\infty) \times \Real \ar[r]^-{+} & \Real
}\]
factors through ${(0,\infty) \rightarrow \Real}$. Then ${\Gamma \gamma : \Gamma C \rightarrow \Gamma R = \Real}$ factors through ${[0,\infty) \rightarrow \Real}$. So $\gamma$ factors through ${H \rightarrow R}$ by Corollary~\ref{CorSpatialPositivStellenSatz}.
This completes the proof that ${A = H}$ as subobjects of $R$.

Certainly, ${1 : \final \rightarrow R}$ factors through ${H = A \rightarrow R}$ so, by Lemma~\ref{LemAigualM}, it only remains to check that the subobject ${H \rightarrow R}$ is closed under multiplication. That is, we need  to prove that the composite
\[\xymatrix{ H \times H \ar[r] & R\times R \ar[r]^-{\cdot} & R }\]
 factors through ${H \rightarrow R}$.  So let $C$ in $\calC$ and let ${u, v : C \rightarrow H}$ in $\Psh{\calC_W}$.
 Then the composite
 \[\xymatrix{ \Gamma C \ar[rr]^-{\twopl{\Gamma u}{\Gamma v}}  && \Gamma H \times \Gamma H  = [0, \infty) \times [0,\infty) \ar[r] & \Real\times \Real \ar[r]^-{\cdot} & \Real }\]
clearly factors through ${[0, \infty) \rightarrow \Real}$, because the restricted multiplication ${[0,\infty) \times [0, \infty) \rightarrow \Real}$ does.
Hence, for every $u, v$ as above, the composite
\[\xymatrix{
 C \ar[rr]^-{\twopl{ u}{ v}}  &&  H \times  H   \ar[r] & R\times R \ar[r]^-{\cdot} & R 
} \]
factors through ${H\rightarrow R}$ in $\Psh{\calC_W}$ by Corollary~\ref{CorSpatialPositivStellenSatz}.
Therefore the composite 
\[\xymatrix{H \times H \ar[r] & R\times R \ar[r]^-{\cdot} & R }\]
does factor through ${H \rightarrow R}$ as we needed to prove.
\end{proof}

\bibliographystyle{alpha}   

\end{document}